\newtheorem{theorem}{Theorem}[section]
\newtheorem{lemma}[theorem]{Lemma}
\newtheorem{proposition}[theorem]{Proposition}
\newtheorem{corollary}[theorem]{Corollary}
\theoremstyle{definition}
\newtheorem{defi}[theorem]{Definition}
\theoremstyle{remark}
\newtheorem{remark}[theorem]{Remark}
\numberwithin{equation}{section}
\newcommand{\abs}[1]{\lvert#1\rvert}
\newcommand{\fonction}[5]{#1 :\left\{\begin{array}{rcl}
#2 & \longrightarrow & #3\\
#4 & \longmapsto     & #5\\
\end{array} \right.}
\newcommand{\ma}[1]{\mathbb #1}
\newcommand{\mc}[1]{\mathcal #1}
\begin{document}
\title{Flat traces for a random partially expanding map}


\date\today
\author{Luc Gossart}
\address{Institut Fourier, 100, rue des maths BP74 38402 Saint-Martin d'Heres France\footnote{2010 Mathematics Subject Classification.  37D30 Partially hyperbolic systems and dominated splittings, 37E10 Maps of the circle, 60F05 Central limit and other weak theorems, 37C30 Zeta functions, (Ruelle-Frobenius) transfer operators, and other functional analytic techniques in dynamical systems. }}

\maketitle
\begin{abstract}
    We consider the skew-product of an expanding map $E$ on the circle $\ma T$  with an almost surely $\mc C^k$ random perturbation $\tau=\tau_0+\delta\tau$ of a deterministic function $\tau_0$:
    \[\fonction{F}{\ma T \times \ma R}{\ma T \times \ma R}{(x,y)}{(E(x), y+\tau(x))} .\] 
    The associated transfer operator $\mc L:u \in C^k (\ma T \times \ma R) \mapsto u\circ F$ can be decomposed with respect to frequency in the $y$ variable into a family of operators acting on functions on the circle:
    \[\fonction{\mc L_{\xi}}{C^k(\ma T)}{C^k(\ma T)}{u}{e^{i\xi\tau}u\circ E}.\]
    We show that the flat traces of $\mc L^n_{\xi}$ behave as normal distributions  in the semiclassical limit $n, \xi\to\infty$ up to the Ehrenfest time $n\leq c_k\log\xi$.
\end{abstract}
\section*{Acknowledgements}
The author thanks Alejandro Rivera for his many pieces of advice regarding probabilities, and Jens Wittsten, Masato Tsujii, Gabriel Rivi\`ere and S\'ebastien Gou\"ezel for interesting discussions about this work, as well as the anonymous referees for their careful reading and constructive suggestions.
\newpage
\tableofcontents

\newpage

\section{Introduction}

This paper focuses on the distribution of the flat traces of iterates
of the transfer operator of a simple example of partially expanding
map.  It is motivated by the Bohigas-Gianonni-Schmidt
\cite{bohigas1984characterization} conjecture in quantum chaos (see
below).\newline In chaotic dynamics, the transfer operator is an
object of first importance linked to the asymptotics of the
correlations.  The collection of poles of its resolvent, called
Ruelle-Pollicott spectrum, can be defined as the spectrum of the
transfer operator in appropriate Banach spaces (see
\cite{ruelle1976zeta} for analytic expanding maps,
\cite{kitaev1999fredholm}, \cite{blank2002ruelle},
\cite{baladi2007anisotropic}, \cite{baladi2008dynamical},
\cite{gouezel2006banach}, \cite{faure2008semi} for the construction of
the spaces for Anosov diffeomorphisms.)\newline

 The study of the Ruelle spectrum for Anosov flows is more difficult
because of the flow direction that is neither contracting nor
expanding.  Dolgopyat has shown in particular in
\cite{dolgopyat1998decay} the exponential decay of correlations for
the geodesic flow on negatively curved surfaces, and Liverani
\cite{liverani2004contact} generalized this result to all $\mc C^4$
contact Anosov flows.  His method involved the construction of
anisotropic Banach spaces in which the generating vector field has a spectral gap, and no longer relies on symbolic dynamics that prevented
from using advantage of the smoothness of the flow.  Tsujii
\cite{tsujii2010quasi} constructed appropriate Hilbert spaces for the
transfer operator of $\mc C^r$ contact Anosov flows, $r\geq 3$ and
gave explicit upper bounds for the essential spectral radii in terms
of $r$ and the expansion constants of the flow.  Butterley and Liverani
\cite{butterley2007smooth} and later Faure and Sjöstrand
\cite{faure2011upper} constructed good spaces for Anosov flows,
without the contact hypothesis.  Weich and Bonthonneau defined in
\cite{bonthonneau2017ruelle} Ruelle spectrum for geodesic flow on
negatively curved manifolds with a finite number of cusps.  Dyatlov and
Guillarmou \cite{dyatlov2016pollicott} handled the case of open
hyperbolic systems.  A simple example of Anosov flow is the suspension
of an Anosov diffeomorphism, or the suspension semi-flow of an
expanding map.  Pollicott showed exponential decay of correlations in
this setting under a weak condition in \cite{pollicott1985rate} and
Tsujii constructed suitable spaces for the transfer operator and gave
an upper bound on its essential spectral radius in
\cite{tsujii2008decay}.\newline

In this article we study a closely related discrete time model, the
skew product of an expanding map of the circle.  It is a particular
case of compact group extension \cite{dolgopyat2002mixing}, which are
partially hyperbolic maps, with compact leaves in the neutral
direction that are isometric to each other.  Dolgopyat showed in
\cite{dolgopyat2002mixing} that the correlation decrease generically
rapidly for compact group extensions, and exponentially in the
particular case of expanding maps.  In our setting of skew-product of
an expanding map of the circle, Faure \cite{faure2011semiclassical}
has shown using semi-classical methods an upper bound on the essential
spectral radius of the transfer operator under a condition shown to be
generic by Nakano Tsujii and Wittsten \cite{nakano2016partial}.  De
Simoi, Liverani, Poquet and Volk \cite{de2017fast} and de Simoi and
Liverani \cite{de2016statistical} \cite{de2018limit} studied fast-slow
dynamical systems, that generalize $\ma T$-extensions of circle
expanding maps.  The roof function, depending on two variables is
multiplied by a small amplitude, and the authors obtained results
about the statistical properties, for long time and small $\varepsilon$.  Arnoldi, Faure, and Weich
\cite{arnoldi2017asymptotic} and Faure and Weich
\cite{faure2017global} studied the case of some open partially
expanding maps, iteration function schemes, for which they found an
explicit bound on the essential spectral radius of the transfer
operator in a suitable space, and obtained a Weyl law (upper bound on
the number of Ruelle resonances outside the essential spectral
radius).  Naud \cite{naud2016rate} studied a model close to the one
presented in this paper, in the analytic setting, in which the
transfer operator is trace-class, and used the trace formula, in the
deterministic and random case to obtain a lower bound on the spectral
radius of the transfer operator.  In the more general framework of
random dynamical systems in which the transfer operator changes
randomly at each iteration, for the skew product of an expanding map
of the circle, Nakano and Wittsten \cite{nakano2015spectra} showed
exponential decay of correlations.\newline

Semiclassical analysis describes the link between quantum dynamics and
the associated classical dynamics in a symplectic manifold.  The
transfer operator happens to be a Fourier integral operator and the
semi-classical approach has thus shown to be useful.  The famous
Bohigas-Giannoni-Schmidt \cite{bohigas1984characterization} conjecture
of quantum chaos states that for quantum systems whose associated
classical dynamic is chaotic, the spectrum of the Hamiltonian shows
the same statistics as that of a random matrix (GUE, GOE or GSE
according to the symmetries of the system)(see also
\cite{gutzwiller2013chaos} and \cite{giannoni1991chaos}).  We are
interested analogously in investigating the possible links between the
Ruelle-Pollicott spectrum and the spectrum of random
matrices/operators.  At first we try to get informations about the
spectrum using a trace formula.  More useful results could follow from
the use of a global normal form as obtained by Faure-Weich in
\cite{faure2017global}.
\subsection{Expanding map}

Let us consider a smooth orientation preserving expanding map $E: \ma T \rightarrow \ma T$ on  the circle $\ma T = \ma R/\ma Z$, that is, satisfying $E'>1$, of degree $l$, and let us call \[m:=\inf E'>1\] and \[M:=\sup E'.\]

\subsection{Transfer operator}\label{1.2}
Let us fix a function $\tau \in C^k(\ma T)$ for some $k\geq 0$.  We are interested in the partially expanding dynamical system on $\ma T \times \ma R$ defined by
\begin{equation}
\label{def_FF}
F(x,y)=\left(E(x),y+\tau(x)\right)\
\end{equation}

We introduce the transfer operator
\[\fonction{\mc L_{\tau}}{\mc C^k(\ma T \times \ma R)}{\mc C^k(\ma T \times \ma R)}u{u\circ F}.\]
\subsection{Reduction of the transfer operator}
Due to the particular form of the map $F$, the Fourier modes in $y$
are invariant under $\mc L_\tau$: if for some $\xi\in\ma R$ and some
$v\in \ma C^k(\ma T)$,
\[u(x,y) = v(x)e^{i \xi y},\]
then
\[\mc L_{\tau} u(x,y)= e^{i\xi\tau(x)} v(E(x)) e^{i\xi y}.\]

Given $\xi \geq 0$ and a function $\tau$, let us consequently consider the transfer operator  $\mc L_{\xi,\tau}$ defined on functions $v \in C^k( \ma T)$ by
\[\forall x\in \ma T,\mc L_{\xi,\tau}  v(x):=e^{i\xi\tau(x)}v(E(x)),\]

\subsection{Spectrum and flat trace}

In appropriate spaces, the transfer operator has a discrete spectrum outside a small disk, the eigenvalues are called Ruelle resonances.
It is in general not trace-class, but one can define its flat trace (see Appendix \ref{appendice Ruelle spectrum} for a more precise discussion about Ruelle resonances, flat trace and their relationship).
\begin{lemma}[Trace formula, \cite{atiyah1967lefschetz}, \cite{guillemin1977lectures}]
For any $\mc C^0$ function $\tau$ on $\ma T$, the flat trace of $\mc L^n_{\xi,\tau}$ is well defined and
\begin{equation}\label{trace}\mathrm{Tr}^\flat\mc L^n_{\xi,\tau}= \sum_{x,E^n(x)=x}\frac{e^{i\xi\tau_x^n}}{{(E^n)'(x)}-1},
\end{equation}
where $\tau^n_x$ denotes the Birkhoff sum: For a function $\phi \in C(\ma T)$  and a point $x\in \ma T$   we define
\begin{equation}\label{birkhoff}\phi_x^n := \sum\limits_{k=0}^{n-1}\phi(E^k(x)).\end{equation}

\end{lemma}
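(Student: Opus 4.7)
The plan is to compute the Schwartz kernel of $\mc L^n_{\xi,\tau}$ explicitly, then apply the definition of the flat trace as the pullback of the kernel to the diagonal, and reduce the resulting distributional integral to a sum over periodic points via the change-of-variable formula for the Dirac delta.

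First I would establish by induction on $n$ that
\[
\mc L^n_{\xi,\tau} v(x) = e^{i\xi \tau^n_x}\, v(E^n(x)),
\]
using the cocycle property of the Birkhoff sum $\tau^n_x = \tau(x) + \tau^{n-1}_{E(x)}$. This identifies the Schwartz kernel of $\mc L^n_{\xi,\tau}$ as the distribution on $\ma T \times \ma T$ given by
\[
K_n(x,y) = e^{i\xi \tau^n_x}\, \delta\bigl(y - E^n(x)\bigr).
\]

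Next, I would invoke the definition of the flat trace as the pairing of $K_n$ with the delta measure on the diagonal, i.e.\ the pullback $\Delta^* K_n$ followed by integration over $\ma T$:
\[
\mathrm{Tr}^\flat \mc L^n_{\xi,\tau} = \int_{\ma T} e^{i\xi \tau^n_x}\, \delta\bigl(x - E^n(x)\bigr)\, dx.
\]
The pullback to the diagonal is well defined because the zero set of $x \mapsto x - E^n(x)$ is finite (any periodic point is isolated, as $E$ is expanding) and transverse, since $(E^n)'(x) \geq m^n > 1$ implies that the differential $1 - (E^n)'(x)$ never vanishes. I would refer to Appendix \ref{appendice Ruelle spectrum} for the wavefront-set justification that $\Delta^* K_n$ makes sense.

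Finally, applying the standard formula
\[
\delta\bigl(g(x)\bigr) = \sum_{x_0:\, g(x_0)=0} \frac{\delta(x-x_0)}{|g'(x_0)|}
\]
with $g(x) = x - E^n(x)$ (so $g'(x) = 1 - (E^n)'(x) < 0$, hence $|g'(x_0)| = (E^n)'(x_0) - 1$) and integrating, one obtains exactly
\[
\mathrm{Tr}^\flat \mc L^n_{\xi,\tau} = \sum_{x:\, E^n(x)=x} \frac{e^{i\xi \tau^n_x}}{(E^n)'(x)-1}.
\]
The main conceptual obstacle is really just the rigorous interpretation of the first distributional integral; once the pullback to the diagonal is justified (using expansion to get transversality), the remaining computation is algebraic and immediate from the kernel formula.
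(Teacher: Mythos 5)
Your argument is correct and follows essentially the same route as the paper: identify the kernel of $\mc L^n_{\xi,\tau}$ as $e^{i\xi\tau^n_x}\,\delta(y-E^n(x))$, restrict to the diagonal, and convert $\delta(x-E^n(x))$ into a sum over fixed points with Jacobian $|1-(E^n)'(x)|=(E^n)'(x)-1$, the transversality coming from $E'>1$. The only caveat is that the paper's appendix contains no wavefront-set justification of the pullback $\Delta^*K_n$; it instead \emph{defines} the flat trace as $\lim_{\epsilon\to0}\int_{\ma T}\langle\rho_{\epsilon,x},\mc L^n_{\xi,\tau}\delta_x\rangle\,dx$ and computes this by two applications of the change-of-variables formula (for $E^n$ and for the local diffeomorphism $y\mapsto y-E^n(y)$), so if you cite the appendix you should phrase the diagonal restriction in terms of that mollified definition rather than appealing to a pullback theorem.
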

\subsection{Gaussian random fields}

We define our random functions on the circle by means of their Fourier coefficients.  We are only interested in $\mc C^0$ functions.  We will  denote by $\mc N(0,\sigma^2)$ (respectively  $\mc N_{\ma C}(0,\sigma^2)$) the real (respectively complex) centered Gaussian law of variance $\sigma^2$, with respective densities \[ \frac{1}{\sigma\sqrt{2\pi}} e^{-\frac{1}{2\sigma^2}x^2} \text{ and } \frac{1}{\sigma\pi} e^{-\frac{1}{\sigma^2}|z|^2}.\] With these conventions, a random variable of law $\mc N_{\ma C} (0,\sigma^2)$ has independent real and imaginary parts of law $\mc N(0,\frac{\sigma^2} 2)$, and the variance of its modulus is consequently $\sigma^2$.
\begin{defi}
We will call centered stationary Gaussian random fields on $\ma T$ the real random distributions $\tau$ whose Fourier coefficients $\left(c_p(\tau)\right)_{p\geq 1}$ are independent complex centered Gaussian random variables, with variances growing at most polynomially, such that $c_0(\tau)$ is a real centered Gaussian variable independent of the $c_p(\tau),\ p\geq 1$.  The negative coefficients are necessarily given by \[c_{-p}(\tau)=\overline{c_p(\tau)}.\]
\end{defi}
The Gaussian fields are in general defined as distributions if their Fourier coefficients have variances with polynomial growth  and the decay of the variances of the coefficients gives sufficient conditions for the regularity of the field.
\begin{lemma}\label{regularite}
If  $\ma E[|c_p(\tau)|^2]$ has a polynomial growth, $\tau=\sum_p c_p(\tau) e^{2i\pi p\cdot}$ defines almost surely a distribution: almost surely
\[\forall \phi= \sum c_p(\phi) e^{2i\pi p\cdot}\in\mc C^\infty(\ma T),\ \langle\tau,\phi\rangle:=\sum_p \overline{c_p(\tau)}c_p(\phi)<\infty.\]
Let $k\in\ma N$.If for some $\eta>0$ \begin{equation}\label{ck}
    \ma E\left[\abs{c_p(\tau)}^2\right]= O\left(\frac 1{p^{2k+2+\eta}}\right).
\end{equation}
Then $\tau$ is almost surely $\mc C^k$.
\end{lemma}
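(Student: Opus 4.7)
The plan is to handle the two assertions separately, in both cases controlling the Fourier coefficients through a Borel--Cantelli type argument.

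For the distributional statement, I would first show that almost surely the sequence $(|c_p(\tau)|)_{p\geq 1}$ itself grows only polynomially. Assuming $\ma E[|c_p(\tau)|^2]\leq Cp^N$ for some $N\geq 0$, the explicit density of $\mc N_{\ma C}(0,\sigma_p^2)$ given in the text shows that $|c_p(\tau)|^2/\sigma_p^2$ is exponential with rate $1$, so $\ma P(|c_p(\tau)|\geq p^{(N+1)/2})\leq \exp(-p/C)$, which is summable in $p$. The Borel--Cantelli lemma then produces a single full-measure event on which $|c_p(\tau)|=O(p^{(N+1)/2})$ as $p\to\infty$. Since every $\phi\in\mc C^\infty(\ma T)$ has Fourier coefficients of rapid decay, on this event the series $\sum_p \overline{c_p(\tau)}c_p(\phi)$ converges absolutely for every such $\phi$ simultaneously, and $\phi\mapsto\langle \tau,\phi\rangle$ defines a distribution.

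For the $\mc C^k$ regularity claim, the strategy is to establish absolute uniform convergence of the formal Fourier series for $\tau$ and for each of its first $k$ derivatives, which will yield $\tau\in \mc C^k$ by term-by-term differentiation of the partial sums. Since the formal $j$-th derivative is $\sum_p (2i\pi p)^j c_p(\tau) e^{2i\pi p x}$ and bounded by $|p|^k|c_p(\tau)|$ for $j\leq k$, it suffices to prove $\sum_p |p|^k |c_p(\tau)|<\infty$ almost surely. By Cauchy--Schwarz applied to the complex Gaussian $c_p(\tau)$ together with the hypothesis \eqref{ck}, $\ma E[|c_p(\tau)|]\leq (\ma E[|c_p(\tau)|^2])^{1/2}=O(p^{-k-1-\eta/2})$, hence Fubini--Tonelli gives $\ma E\bigl[\sum_p |p|^k|c_p(\tau)|\bigr]=\sum_p O(p^{-1-\eta/2})<\infty$, and the series converges almost surely. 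The resulting $\mc C^k$ function then coincides with the distribution from the first part, since uniform convergence of the partial sums permits interchange of sum and integration against any test function.

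The argument is essentially soft and the only mild point to check is that the exceptional null set does not depend on the test function $\phi$ in the first part, nor on the differentiation order $j\leq k$ in the second; this is automatic since each claim reduces to a single almost-sure summability property of the deterministic sequence $(|c_p(\tau)|)_{p\geq 1}$.
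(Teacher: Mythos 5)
Your proof is correct. For the distributional part you argue exactly as the paper does in Appendix \ref{Borel-Cantelli}: a Gaussian tail bound plus Borel--Cantelli gives an almost sure polynomial bound on $|c_p(\tau)|$, valid on a single full-measure event, which then pairs against the rapidly decaying coefficients of any smooth $\phi$. For the $\mc C^k$ part, however, you take a genuinely different and more elementary route. The paper again goes through Borel--Cantelli: it first proves that standardized Gaussians satisfy $X_p=o(p^\delta)$ almost surely for every $\delta>0$, deduces the pointwise rate $c_p(\tau)=o(p^{-k-1-\eta/2})$ a.s., and then concludes normal convergence of the $k$-times differentiated series. You instead bound $\ma E\bigl[\sum_p |p|^k|c_p(\tau)|\bigr]$ directly via Cauchy--Schwarz ($\ma E[|c_p|]\leq(\ma E[|c_p|^2])^{1/2}$) and Tonelli, getting a.s. summability of $\sum_p|p|^k|c_p(\tau)|$ in one line with no tail estimates at all. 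Your argument is shorter and avoids the exceptional-set bookkeeping entirely (the a.s. finiteness of a single nonnegative series covers all derivative orders $j\leq k$ at once); the paper's argument has the mild advantage of producing an explicit almost sure decay rate for each individual coefficient, which is the statement that also disposes of the distributional claim without a second computation. Both are complete proofs of the lemma.
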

\begin{proof}
See appendix \ref{Borel-Cantelli}.
\end{proof}
In what follows we will always assume that (\ref{ck}) is satisfied, at least for $k=0$, so that our random fields are random variables on $\mc C^0(\ma T)$.  This will ensure the existence of flat traces.

\subsection{Result}

If $x$ is a periodic point, let us write its prime period
\[l_x:=\min\{k\geq1,E^k(x)=x\}.\]

Let us define for every $n\in\ma N$:
\begin{equation}
    \label{def_An}
A_n:=\left(\sum\limits_{E^n(x)=x}\frac {l_x}{((E^n)'(x)-1)^2}\right)^{-\frac 1 2}
\end{equation}


\begin{theorem}\label{main}
Let $k\in\ma N$.  Let $\tau_0\in\mc C^k(\ma T)$.
Let \[\delta\tau=\sum_{p\in\ma Z}c_p e^{2i\pi p\cdot}\]
be a centered Gaussian random field, such that $\ma E[|c_p|^2]=O(p^{-2-\nu})$ for some $\nu>0$.  This way, $\delta\tau$ is a.s. $\mc C^0$.
If
\begin{equation}\label{condition} \exists\epsilon>0, \exists C>0, \forall p\in\ma Z^*, \ma E\left[\left|c_p\right|^2\right]\geq \frac C{p^{2k+2+\epsilon}},\end{equation}

then one has the convergence in law of the flat traces

\begin{equation}\label{theorem}
    A_n\mathrm{Tr}^\flat\left(\mc L^n_{\xi, \tau_0+\delta\tau}\right)\longrightarrow\mc N_{\ma C}(0,1)
\end{equation}
as $n$ and $\xi$ go to infinity, under the constraint 
\begin{equation}\label{cond2}
\exists 0<c<1, \forall  n,\xi,\  n\leq c\frac{\log\xi}{\log l+(k+\frac 12+\frac\epsilon2)\log M}.
\end{equation}
Note that condition (\ref{condition}) can allow $\tau$ to be $\mc C^k$ by Lemma \ref{regularite}.
\end{theorem}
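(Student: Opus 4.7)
The plan is to use the method of moments to prove the convergence
\[A_n^{p+q}\, \ma E\left[T_n^p\, \overline{T_n}^{\,q}\right] \longrightarrow p!\,\delta_{pq}\qquad\text{for every }p,q\in\ma N,\]
where $T_n:=\mathrm{Tr}^\flat\mc L_{\xi,\tau_0+\delta\tau}^n$; this identifies the limit as $\mc N_{\ma C}(0,1)$. Using \eqref{trace} and the fact that the $l_O$ points of any prime orbit $O$ of period $l_O\mid n$ share a common Birkhoff sum, I would first regroup the trace by orbits:
\[T_n=\sum_{O}a_O\,Z_O,\quad a_O=\frac{l_O\,e^{i\xi(\tau_0)_O^n}}{(E^n)'|_O-1},\quad Z_O=e^{i\xi(\delta\tau)_O^n}.\]
The random variables $(\delta\tau)_O^n=\tfrac{n}{l_O}\sum_{x\in O}\delta\tau(x)$ form a centered Gaussian family, so each summand of the moment expansion of $\ma E[T_n^p\overline{T_n}^{\,q}]$, indexed by tuples $(\mathbf O,\mathbf O')$ of orbits, carries a Gaussian characteristic factor $\exp(-\xi^2\Sigma^2(\mathbf O,\mathbf O')/2)$, where $\Sigma^2=\sum_p\ma E[|c_p|^2]\,|\hat\mu(p)|^2$ and $\mu$ is the signed atomic measure with mass $n/l_{O_i}$ at each $x\in O_i$ and $-n/l_{O'_j}$ at each $y\in O'_j$.

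\textbf{Diagonal contribution.} We have $\Sigma^2=0$ exactly when $\mu=0$, which forces $p=q$ together with a pairing of the two multisets of orbits. For tuples of pairwise distinct orbits, there are exactly $p!$ such pairings, and each contributes $\prod_i|a_{O_i}|^2$. Summing yields $p!\,A_n^{-2p}\bigl(1+o(1)\bigr)$ by the very definition \eqref{def_An} of $A_n$, so that after multiplying by $A_n^{2p}$ the diagonal contributes the expected $p!$. Tuples with coincident orbits are combinatorially subdominant.

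\textbf{Off-diagonal contribution.} All remaining tuples have $\mu\neq 0$ and must be shown to give a vanishing contribution. The lower bound \eqref{condition} yields $\Sigma^2\geq C\sum_p p^{-(2k+2+\epsilon)}|\hat\mu(p)|^2$. In the worst configuration, where two distinct orbits come within dynamical distance $d$, splitting the Fourier sum at the threshold $p\sim 1/d$ yields $\Sigma^2\gtrsim d^{2k+1+\epsilon}$; since distinct periodic points of period $\leq n$ are separated by at least $c\,M^{-n}$, one obtains $\Sigma^2\gtrsim M^{-(2k+1+\epsilon)n}$, and a Fourier-uniqueness argument extends this bound to general nontrivial $\mu$. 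The Ehrenfest condition \eqref{cond2} is exactly calibrated so that $\xi^2\Sigma^2\gg (p+q)n\log l$, so that $e^{-\xi^2\Sigma^2/2}$ crushes the combinatorial factor $(l^n)^{p+q}$ counting orbit tuples.

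\textbf{Main obstacle.} The crucial technical step is the uniform lower bound on $\Sigma^2$ across all non-diagonal configurations, and in particular across near-coincidences where most orbits in $\mathbf O$ and $\mathbf O'$ agree and only a few differ slightly. Balancing this Fourier-uniqueness lower bound against the exponential proliferation of orbit tuples is precisely what fixes the Ehrenfest exponent $\log l+(k+\tfrac12+\tfrac\epsilon2)\log M$ appearing in \eqref{cond2}; the remaining work is largely bookkeeping over levels of coincidence.
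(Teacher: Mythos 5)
Your route is genuinely different from the paper's, and in outline it works. The paper never computes moments: it replaces $\delta\tau$ \emph{in law} by an auxiliary sum $\sum_j\delta\tau_j$ whose $n$-th layer has covariance supported at scale $M^{-n}$, so that its values at distinct period-$n$ points are \emph{exactly} independent; it then shows that a Gaussian of large variance reduced mod $2\pi$ is nearly uniform, that adding the independent remainder $Y^n_O$ preserves near-uniformity and independence of the phases, and concludes by computing the characteristic function as a product over orbits of circle integrals. In that proof the lower bound \eqref{condition} is used only to ensure the auxiliary field's spectral density is dominated by that of $\delta\tau$, so the decomposition exists. You instead keep the original field and use \eqref{condition} directly as a quantitative decorrelation input through $\ma E[e^{i\xi\langle\mu,\delta\tau\rangle}]=e^{-\xi^2\Sigma^2/2}$. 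What you gain is the absence of any auxiliary construction and explicit rates for the moments; what you pay is the orbit-tuple combinatorics and, above all, a uniform lower bound on $\Sigma^2$, which is the single load-bearing step of your plan. (Both proofs also need the pressure estimate of Lemma \ref{ref}: your ``coincident orbits are subdominant'' is exactly $\sum_O|a_O|^4=o\bigl(\bigl(\sum_O|a_O|^2\bigr)^2\bigr)$, which follows from it, and your identification of the diagonal with $p!\,A_n^{-2p}$ correctly uses that $\sum_O|a_O|^2=A_n^{-2}$ by \eqref{def_An}.)

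The one place where your sketch is not yet a proof is the off-diagonal lower bound. The worst case is not ``two orbits within distance $d$'': a general non-diagonal $\mu$ is a signed combination of up to $(p+q)n$ periodic points whose weights $\frac{n}{l_O}(m_O-m'_O)$ may largely cancel, and $|\hat\mu(p)|$ does not decay in $p$, so ``splitting the Fourier sum at $p\sim 1/d$'' must be replaced by a positivity argument valid for every configuration. The correct missing lemma is: if the atoms of $\mu=\sum_j w_j\delta_{x_j}$ are $\delta$-separated, then $\sum_{1\le |p|\le N}|\hat\mu(p)|^2\ge c\,N\sum_j|w_j|^2$ for $N\ge 2/\delta$; this follows by testing $|\hat\mu|^2$ against the (nonnegative) Fej\'er kernel $F_N$ and checking that $\sum_{j\ne i}F_N(x_i-x_j)\le C(N\delta^2)^{-1}\le N/2$. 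Combined with \eqref{condition}, with $\delta\ge (M^n-1)^{-1}$ from Lemma \ref{periodic}, and with the observation that every nonzero net weight is a nonzero integer multiple of $n/l_O$, hence at least $1$, this yields $\Sigma^2\ge c\,M^{-n(2k+1+\epsilon)}$ uniformly, and then \eqref{cond2} gives $\xi^2\Sigma^2\ge c\,l^{2n/c}$, which is super-exponential and does crush $l^{n(p+q)}$ together with the polynomial-in-$e^{Cn}$ prefactors. So your calibration of the Ehrenfest exponent is right, but the ``Fourier-uniqueness argument'' must be made into the Fej\'er-kernel inequality above for the argument to close.
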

\begin{remark}
The statement implies that the convergence still holds if we multiply $\delta\tau$ by an arbitrarily small number $\eta>0$.  For instance for $\tau_0 = 0$, \[A_n\mathrm{Tr}^\flat\left(\mc L^n_{\xi, 0}\right)\longrightarrow\infty\] at exponential speed, uniformly in $\xi$, but if $\delta\tau$ is an irregular enough Gaussian field in the sense of (\ref{condition}), then for any $\eta>0$ and $c<1$ holds
\begin{equation*}
    A_n\mathrm{Tr}^\flat\left(\mc L^n_{\xi, \eta \cdot\delta\tau}\right)\longrightarrow\mc N_{\ma C}(0,1)
\end{equation*}
under condition (\ref{cond2}).
\end{remark}
\begin{remark}
 Condition (\ref{cond2}) means that time $n$ is smaller than a constant times  the Ehrenfest time $\log \xi$, and this constant decreases with the regularity $k$ of the field $\delta \tau$.
\end{remark}

\subsection{Sketch of proof}

The proof is based around the following arguments:
\begin{enumerate}
    \item Note first that the convergence (\ref{theorem}) is satisfied if all the phases appearing in (\ref{trace}) are independent and uniformly distributed. \begin{remark}
    For sake of simplicity, in this sketch of proof, we will state pairwise independence for the phases in (\ref{trace}), while in fact we must pack them by orbits, since Birkhoff sums $\phi^n_x$ are the same on all the orbit, but this changes little to the problem.  For instance this simplification would remove the factor $l_x$ in the definition (\ref{An}) of $A_n$ corresponding to this multiplicity.
    \end{remark}The convergence can be deduced from the standard proof of the central limit theorem showing pointwise convergence of the characteristic function.  However, here, since the periodic points are dense in $\ma T$, requiring independence of the values $(\delta\tau(x))_{E^n(x)=x}$ would lead to very bad regularity of the field (it is not hard to see that it would be almost surely nowhere locally bounded). 
    \item We fix a Gaussian field $\delta\tau=\sum c_pe^{2i\pi p\cdot}$ fulfilling the hypothesis of Theorem \ref{main} and start by constructing an auxiliary field with the same law and show that it satisfies the convergence (\ref{theorem}).  This is sufficient since the convergence in law only involves the law of the random field.
    
    \item For each $j\geq 1$, we construct a smooth random field $\delta\tau_j$, such that for any pair of periodic points $x\neq y$ of period $j$, $\delta\tau_j(x)$ and $\delta\tau_j(y)$ are independent. 
    Since by (\ref{trace}) $\mathrm{Tr}^\flat(\mc L^n_{\xi,\tau})$ only involves points of period $n$, the phases appearing at time $n$, for the function $\delta\tau_n$, in $\mc L^n_{\xi,\delta\tau_n}$ are consequently all independent random variables on $S^1.$
    If moreover $\xi$ is large enough, the variables $\xi\left(\delta\tau_n\right)^n_x$ are Gaussian with large variances, so $\xi\left(\delta\tau_n\right)^n_x\mod 2\pi$ (and therefore the phases $e^{i\xi\left(\delta\tau_n\right)^n_x}$) are close to be uniform.  Thus, the convergence (\ref{theorem}) should hold for $\mathrm{Tr}^\flat(\mc L^n_{\xi,\delta\tau_n})$ under a certain relation between $n$ and $\xi$ that will be explained in number (8). 
    
    \item An important point is that if the phases $(e^{i\xi(\delta\tau_n)^n_x})_{\{x\in\ma T,E^n(x)=x\}}$ are independent and close to be uniform, then adding to $\delta\tau_n$ an independent field will not change this fact, as the following lemma suggests:
 \begin{lemma}\label{indep}
Let $X,X'$  be real independent random variables such that $e^{iX}, e^{iX'}$ are uniform on $S^1$.  Let $Y,Y'$  be real  random variables such that  $X$ and $ X'$ are  independent of both $Y$ and $Y'$.  Then $e^{i(X+Y)}$ and $e^{i(X'+Y')}$ are still independent uniform random variables on $S^1$.
\end{lemma}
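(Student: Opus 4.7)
The plan is to identify the joint distribution of $(e^{i(X+Y)}, e^{i(X'+Y')})$ on the torus $S^1 \times S^1$ with Haar measure, by computing all its Fourier coefficients. Recall that independence and uniformity on $S^1 \times S^1$ is equivalent to vanishing of every Fourier coefficient indexed by $(m,m')\in\ma Z^2\setminus\{(0,0)\}$.

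Concretely, for such an $(m,m')$, I would compute
\[
\ma E\!\left[e^{im(X+Y)}\,e^{im'(X'+Y')}\right]
=\ma E\!\left[e^{imX+im'X'}\cdot e^{imY+im'Y'}\right].
\]
The hypothesis is naturally read as: the pair $(X,X')$ is independent of the pair $(Y,Y')$. Under that reading the expectation factorizes as $\ma E[e^{imX+im'X'}]\cdot \ma E[e^{imY+im'Y'}]$. Since $X$ and $X'$ are themselves independent, the first factor becomes $\ma E[e^{imX}]\,\ma E[e^{im'X'}]$, and the uniformity of $e^{iX}$ and $e^{iX'}$ on $S^1$ forces each of these one-dimensional Fourier coefficients to vanish whenever its index is nonzero. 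Hence for any $(m,m')\neq(0,0)$ at least one factor vanishes, and the whole expression is $0$. As the Fourier coefficients of the joint law coincide with those of the uniform measure on $S^1\times S^1$, uniqueness of Fourier series on the torus yields the conclusion.

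The main delicacy is not computational but semantic: the statement "$X$ and $X'$ are independent of both $Y$ and $Y'$" must be understood as joint independence of the $\sigma$-algebras $\sigma(X,X')$ and $\sigma(Y,Y')$, which is what the factorization step above requires. Mere pairwise independence of $X$ with $Y$, $X$ with $Y'$, etc., would not suffice. In the setting where the lemma is applied in the paper, $Y,Y'$ will be built from Fourier modes of the auxiliary field that are independent of those entering $X,X'$, so the strong form of independence is available and the argument goes through as above. An equivalent (and arguably more transparent) route is to condition on $(Y,Y')$: conditionally, $e^{i(X+Y)}$ and $e^{i(X'+Y')}$ are rotations of the independent uniforms $e^{iX},e^{iX'}$, hence still independent and uniform, and because this conditional law does not depend on $(Y,Y')$ the unconditional joint law is the same (and is moreover independent of $(Y,Y')$).
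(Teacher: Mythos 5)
Your proof is correct and follows essentially the same route as the paper's: both hinge on reading the hypothesis as independence of the pair $(X,X')$ from the pair $(Y,Y')$, factorizing the joint law accordingly, and integrating out the uniform variables --- you do this with the characters $e^{im\theta}e^{im'\theta'}$, the paper with arbitrary test functions $f\otimes g$ followed by a change of variables, and your closing ``conditioning'' remark is precisely the paper's computation. Your explicit warning that mere pairwise independence would not suffice is a point the paper leaves implicit but does rely on.
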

Note that no independence between $Y$ and $Y'$ is needed.
See appendix \ref{annexe3} for the proof.

\item Using this analogy, if the fields $\delta\tau_j$ are chosen independent, it should follow that the convergence (\ref{theorem}) holds for  $\mathrm{Tr}^\flat\left(\mc L^n_{\xi,\sum_{j\geq 1}\delta\tau_j}\right)$ for large $\xi$.
\item  The fields $\delta\tau_j$ are almost surely smooth.  However, because the distance between periodic points decreases as $M^{-j}$ according to Lemma \ref{periodic}, if we want to be sure that $\sum_j\delta\tau_j$ is $\mc C^k$, and $\ma E[\delta\tau_j(x)\delta\tau_j(y)]=0$ for all $x\neq y$ of period $j$, let us see that we need to impose an exponential decay of the standard deviation (independent of the point $x$):
\begin{equation}\label{heuristique ecart type}
    \sqrt{\ma E[|\delta\tau_j(x)|^2]}\approx M^{-j(k+\frac 12+\varepsilon)}
\end{equation}  for some $\varepsilon>0.$ This can be deduced heuristically from the fact (see Definition \ref{defi_covariance} below) that \begin{equation}\label{covariance sketch}
    \ma E[\delta\tau_j(x)\delta\tau_j(y)]=\sum_p \ma E[|c_p(\delta\tau_j)|^2] e^{ip(x-y)}=:K_j(x-y)
\end{equation}and the uncertainty principle: a localisation of $K_j$ at a scale $M^{-j}$ implies non negligible coefficients $\ma E[|c_p(\delta\tau_j)|^2]$ for $p$ of order $M^j$.
Let us for instance assume that the Fourier coefficients $\ma E[|c_p(\delta\tau_j)|^2]$ of $K_j$ write
\begin{equation}
    \ma E[|c_p(\delta\tau_j)|^2]=\alpha_j^2f\left(\frac p{M^j}\right)^2
\end{equation} for some amplitudes $\alpha_j$ to determine and some positive Schwartz function $f:\ma R\longrightarrow\ma R.$ Then,
since
\[\delta\tau_j=\sum_p \sqrt{\ma E[|c_p(\delta\tau_j)|^2]} \zeta_{j,p}e^{2i\pi p\cdot}\] for i.i.d. $\mc N(0,1)$ random variables $\zeta_{j,p}$, roughly,
\[\begin{split}
    \sup|\delta\tau_j^{(k)}|&\approx\alpha_j\sum_p |p|^k f\left(\frac p{M^j}\right)\\
                            &=\alpha_j M^{j(k+1)}\frac1{M^j}\sum_p\frac{|p|^k}{M^{jk}}f\left(\frac p{M^j}\right)\\
                            &\sim C \alpha_j M^{j(k+1)}.
\end{split}\]
(The second line involved a Riemann sum.)
Consequently, with those approximations, choosing $\alpha_j = M^{-j(k+1+\varepsilon)}$ gives a $\mc C^k$ function $\sum_{j\geq 1}\delta\tau_j$.  Then,
\[\begin{split}
    \ma E[|\delta\tau_j(x)|^2]&\underset{(\ref{covariance sketch})}=\sum_p\ma E[|c_p(\delta\tau_j)|^2]\\
                              &\underset{\phantom{(\ref{covariance})}}= \sum_p\alpha_j^2f\left(\frac p{M^j} \right)^2\\
                              &\underset{\phantom{(\ref{covariance})}}=\alpha_j^2M^j\frac1{M^j}\sum_pf\left(\frac p{M^j} \right)^2\\
                              &\underset{\phantom{(\ref{covariance})}}\sim C \alpha_j^2M^j = M^{-j(2k+1+2\varepsilon)}
\end{split}\]
as announced.
\item This condition, together with (\ref{condition}) can easily be shown to imply that the Fourier coefficients $\tilde c_p$ of $\sum_{j\geq1}\delta\tau_j$ satisfy
\[\ma E[|\tilde c_p|^2]\leq C\ma E[|c_p|^2].\]
This allows us to define a field $\delta\tau_0$, that we chose independent from the other $\delta\tau_j$, by \[\ma E[|c_p(\delta\tau_0)|^2]= C\ma E[|c_p|^2]-\ma E[|\tilde c_p|^2],\] so that $\frac1C\sum_{j\geq0}\delta\tau_j$ has the same law as $\delta\tau$ and still satisfies the convergence (\ref{theorem}) for $\xi$ large enough from (4) of this sketch.
\item To get an idea of the origin of the relation (\ref{cond2}) between $n$ and $\xi$, let us assume that we want all the arguments $\xi(\delta\tau_n)^n_x$ in $\mathrm{Tr}^\flat(\mc L^n_{\xi,\delta\tau_n})$ to go uniformly to infinity in order to get approximate uniformity of the phases and thus convergence towards a Gaussian law.  Note that for any $x$, \begin{equation}\label{tac}
    \ma P\left[\frac{\left|(\delta\tau_n)^n_x\right|}{ \sqrt{\ma E[|(\delta\tau_n)^n_x|^2]}}\leq\epsilon\right]\underset{\epsilon\to0}=O(\epsilon).
\end{equation}Let $(C_n)$ be a sequence going to infinity.

(\ref{tac}) implies  \[\begin{split}
    \ma P\left[\bigcap_{E^n(x)=x}\left\{\xi(\delta\tau_n)^n_x> C_n\right\}\right]&\ \ \ \ =1-\ma P\left[\exists x,E^n(x)=x,\xi(\delta\tau_n)^n_x\leq C_n \right]\\
                                                                  &\ \ \ \ \geq 1- \sum_{E^n(x)=x}\ma P\left[\xi(\delta\tau_n)^n_x\leq C_n \right]\\
                                                                  &\underset{\mathrm{Lemma } \ref{periodic}}= 1-(l^n-1) \ma P\left[\xi(\delta\tau_n)^n_x\leq C_n \right]\\
                                                                  &\ \ \ \underset{(\ref{tac})}\geq 1-Cl^n \frac{C_n}{\xi\sqrt{\ma E[|(\delta\tau_n)^n_x|^2]}}
\end{split}\]
if $x$ denotes any point and $\xi\gg\frac{C_n}{\sqrt{\ma E[|(\delta\tau_n)^n_x|^2]}}.$ By independence \[\begin{split}
    \sqrt{\ma E[|(\delta\tau_n)^n_x|^2]}&\underset{\phantom{(\ref{heuristique ecart type})}}=\left(\sum_{k=0}^{n-1} {\ma E[|\delta\tau_n(E^k(x))|^2]}\right)^{\frac12}\\
                                        &\underset{(\ref{heuristique ecart type})}\approx \sqrt n M^{n(k+\frac12+\varepsilon)}
\end{split}\]
for some $\varepsilon>0.$
Thus
\[\ma P\left[\xi(\delta\tau_n)^n_x\to\infty \mathrm{\ uniformly}\ \mathrm{w.r.t.\ }x\mathrm{\ s.t.\ }E^n(x)=x\right]\to1\]
for $\xi\gg l^n M^{n(k+\frac 12+\varepsilon)}, $ which gives (\ref{cond2}).
     \end{enumerate}

 \section{Numerical experiments}
 We consider an example with the non linear expanding map
 \begin{equation}
 \label{ex_E}
 E(x) = 2x + 0.9/(2\pi)  \sin (2\pi (x+0.4) )    
 \end{equation}
 plotted on Figure  \ref{fig:dessin_E}.
 In Figure \ref{fig:dessin2}, we have the histogram of 
 the modulus $ S = \left| A_n\mathrm{Tr}^\flat\left(\mc L^n_{\xi, \tau_0+\delta\tau}\right) \right| $ obtained after a sample of $10^4$ random functions $\delta \tau$.
 We compare the histogram with the function $  C S \exp (-S^2) $ in red, i.e.  the radial distribution of a Gaussian function, obtained from the prediction of Theorem \ref{main}.
 We took $n=11$, $\xi = 2 .10^6$, $\tau_0 = \cos(2 \pi x)$.  We also observe a good agreement for the (uniform) distribution of the arguments that is not represented here.

  \begin{figure}[h!]  \includegraphics[scale=0.3]{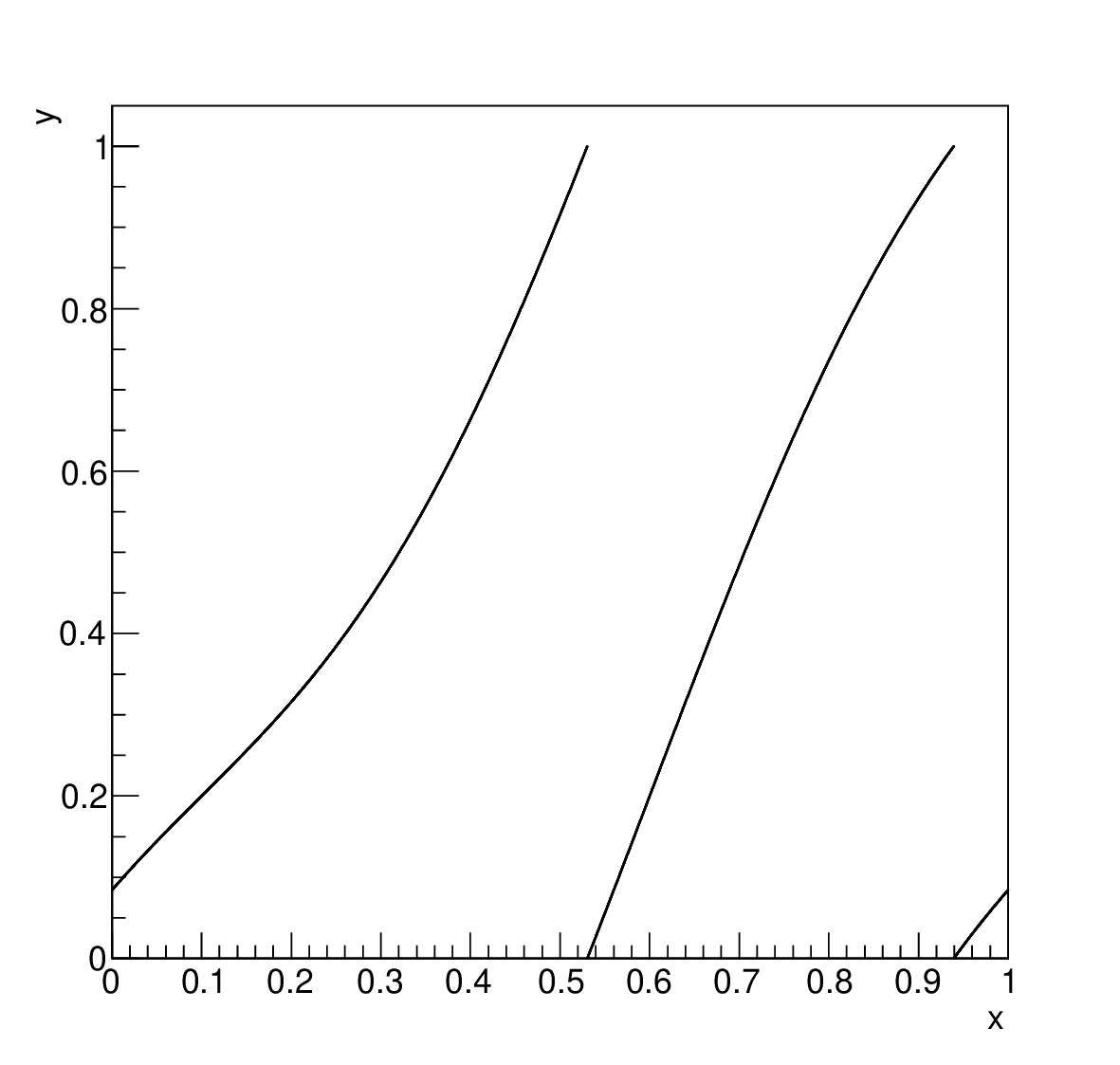}
  \caption{Graph of the expanding map $E(x)$ in Eq.(\ref{ex_E})} \label{fig:dessin_E} \end{figure}

  \begin{figure}[h!]  \includegraphics[scale=0.4]{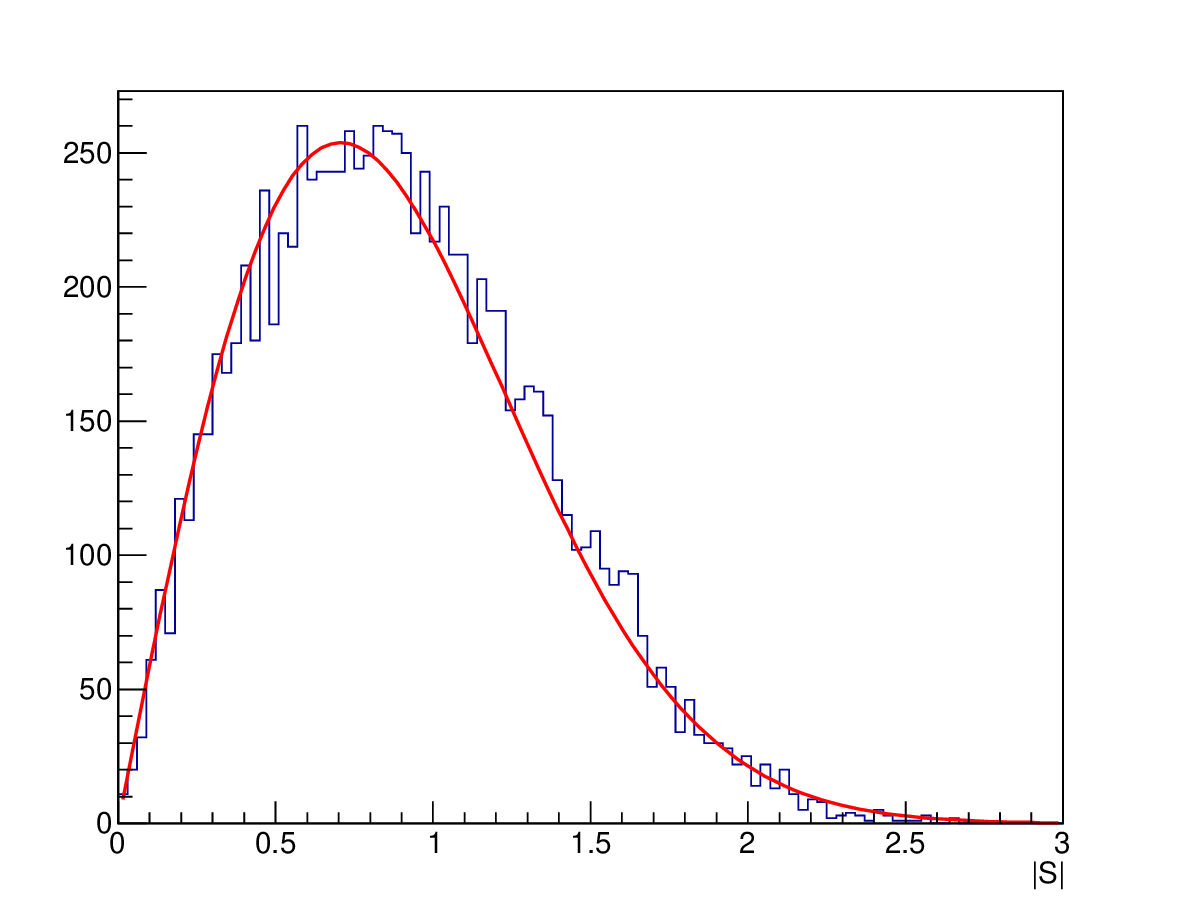}
  \caption{In blue, the histogram of $ S = \left| A_n\mathrm{Tr}^\flat\left(\mc L^n_{\xi, \tau_0+\delta\tau}\right) \right| $ for $n=11$, $\xi = 2 .10^6$, $\tau_0 = \cos(2 \pi x)$ and  the sample $10^4$ random functions $\delta \tau$.  The histogram is  well fitted by $  C S \exp (-S^2) $ in red, as predicted by Theorem  \ref{main}} \label{fig:dessin2} \end{figure}
 
 \newpage
\section{Proof of theorem \ref{main}}
A stationary centered Gaussian random field is characterized by its covariance function:
\begin{defi}\label{defi_covariance}
Let $\tau=\sum_{p\in\ma Z}c_pe^{2i\pi p\cdot}$ be a stationary centered Gaussian random field, satisfying \[\ma E[|c_p|^2]=O\left(\frac{1}{p^{2+\eta}}\right)\] for some $\eta>0,$ so that $\tau$ is almost surely $\mc C^0$ according to Lemma \ref{regularite}.
Let us define its covariance function $K$ by
\begin{equation}\label{covariance}
    K(x):=\sum_p\ma E[|c_p|^2]e^{2i\pi p x}.
\end{equation}
For any pair of points $(x,y)\in\ma T^2,$ we have
\begin{equation}\label{equation covariance}
    \ma E\left[{\tau(x)}\tau(y)\right]=K(x-y).
\end{equation}
\end{defi}
\begin{proof}[Proof of the last statement]
Remark from Appendix \ref{Borel-Cantelli} that the condition $\ma E[|c_p|^2]=O\left(\frac{1}{p^{2+\eta}}\right)$ implies that $\tau$ is almost surely equal to its Fourier series.  Thus,
\[\begin{split}
    \ma E\left[{\tau(x)}\tau(y)\right]&=\sum_{p,q\in\ma Z}\ma E[c_p(\tau)c_q(\tau)]e^{2i\pi(px+qy)}\\
    &=\sum_{p\in\ma Z}\left(\ma E[\abs{c_p}^2]e^{2i\pi p(x-y)}+\ma E[{c_p}^2]e^{2i\pi p(x+y)}\right)
\end{split}\]
from the independence relationships of the Fourier coefficients.
Now, 
\[\ma E[{c_p}^2] = \mathbb E [(\mathrm{Re}(c_p))^2]-\ma E[(\mathrm{Im}(c_p))^2]+2i\ma E[(\mathrm{Re}(c_p))(\mathrm{Im}(c_p))] =0.\]
\end{proof}
\subsection{Definition of a Gaussian field satisfying Theorem \ref{main}}
Let us fix a random centered Gaussian field $\delta\tau=\sum_{p\in\ma Z}c_pe^{2i\pi p\cdot}$ satisfying the hypothesis of Theorem \ref{main}.
 Let us define the Gaussian fields mentioned in step (3) of the sketch of proof.  Let $K_{\text{init}}\in\mc C^\infty_c(\ma R)$ be a smooth function supported in $\left[-\frac 13,\frac13\right]$, with non negative Fourier transform, satisfying \footnote{To construct such a function, take a non zero even function $g\in\mc C^\infty_c(\ma R)$. $g$ has a real Fourier transform.  Then $g*g\in\mc C^\infty_c(\ma R)$ and its Fourier transform is $\hat g^2\geq 0$.  Moreover $g*g(0)=\int \hat g^2>0$. }
 \begin{equation}
 \label{K(0)}
     K_{\text{init}}(0)=1.
 \end{equation}
 
 Let $k\geq0$ be the integer involved in Theorem \ref{main} giving the regularity of the field.  Let $\epsilon>0$ be the constant appearing in Theorem \ref{main} and define for any integer $j\geq 1$
\begin{equation}
    \label{kj}K_j(x) = \frac 1 {M^{j(2k+1+\epsilon)}} K_{\text{init}}(M^jx).
\end{equation}

The Fourier transform of $K_j$ is given by 
\begin{equation}\label{Fourier}\widehat{K_j}(\xi) = \frac 1 {M^{j(2k+2+\epsilon)}} \widehat{K_{\text{init}}}\left(\frac\xi{M^j}\right)\geq0\end{equation}
 The functions  $K_j$, for all $j\geq 1$, are  supported in $\left[-\frac 13,\frac13\right]$ and can then be seen as functions on the circle $\ma T$ by trivially periodizing them.  Let $c_{p,j}$, for $p\geq 0, j\geq 1$ be independent centered Gaussian random variables of respective variances $\hat K_j(2\pi p)$, and let us write
\[\delta\tau_j=\sum_p c_{p,j}e^{2i\pi p\cdot},\]
where $c_{-p,j}:=\overline{c_{p,j}},p\geq 1$.  Note that, since $K_j$ is smooth for all $j,$ the variances $\hat K_j(2\pi p)$ of $c_{p,j}$ decay rapidly with $p$ (for fixed $j$), and therefore, each $\delta\tau_j$ is almost surely smooth by Lemma \ref{lemme Alejandro}.
 \begin{lemma}
$\sum_{j\geq1}\delta\tau_j$ is a centered Gaussian random field $\sum \tilde c_p e^{2i\pi p\cdot}$ and \[\ma E\left[|\tilde c_p|^2\right]=O\left(\ma E\left[|c_p|^2\right]\right).\]
 \end{lemma}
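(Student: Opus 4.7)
The plan is to identify the Fourier coefficients of the candidate field $\sum_{j\geq 1}\delta\tau_j$ and then estimate their variances. For each fixed $p\in\ma Z$, the variables $(c_{p,j})_{j\geq 1}$ are independent centered Gaussians of respective variances $\widehat{K_j}(2\pi p)\geq 0$. Since $\widehat{K_{\text{init}}}$ is Schwartz (as the Fourier transform of a smooth, compactly supported function) and in particular bounded, one has $\sum_{j\geq 1}\widehat{K_j}(2\pi p)\leq \|\widehat{K_{\text{init}}}\|_\infty \sum_j M^{-j(2k+2+\epsilon)}<\infty$. Hence the series $\tilde c_p:=\sum_{j\geq 1}c_{p,j}$ converges in $L^2$ and almost surely to a centered Gaussian of variance $\sigma_p^2:=\sum_{j\geq 1}\widehat{K_j}(2\pi p)$; the Hermitian symmetry $\tilde c_{-p}=\overline{\tilde c_p}$ is inherited from the $c_{p,j}$, and independence of $\tilde c_p$ and $\tilde c_q$ for $p\neq \pm q$ follows from independence of the entire families $(c_{p,j})_j$ and $(c_{q,j})_j$. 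Once the polynomial decay of $\sigma_p^2$ is established, Lemma \ref{regularite} legitimates the identification $\sum_{j\geq 1}\delta\tau_j=\sum_p\tilde c_p e^{2i\pi p\cdot}$ as a stationary centered Gaussian field.

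The heart of the lemma is then the estimate $\sigma_p^2=O(p^{-(2k+2+\epsilon)})$. Writing $\widehat{K_j}(2\pi p)=M^{-j(2k+2+\epsilon)}\widehat{K_{\text{init}}}(2\pi p/M^j)$, the plan is to split the sum over $j$ at the critical scale $j_p:=\lfloor\log p/\log M\rfloor$. For the high-index tail $j\geq j_p$ one has $2\pi p/M^j=O(1)$ so $\widehat{K_{\text{init}}}(2\pi p/M^j)=O(1)$, and the contribution is a geometric series dominated by its first term $M^{-j_p(2k+2+\epsilon)}=O(p^{-(2k+2+\epsilon)})$. For the low-index part $j<j_p$, the Schwartz decay of $\widehat{K_{\text{init}}}$ gives $|\widehat{K_{\text{init}}}(2\pi p/M^j)|\leq C_N(M^j/p)^N$ for any $N$; choosing $N>2k+2+\epsilon$, the contribution $C_N p^{-N}\sum_{j<j_p}M^{j(N-2k-2-\epsilon)}$ is again a geometric series dominated by its last term $M^{j_p(N-2k-2-\epsilon)}$, yielding the same bound $O(p^{-(2k+2+\epsilon)})$.

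Combining this estimate with the lower bound $\ma E[|c_p|^2]\geq C p^{-(2k+2+\epsilon)}$ from hypothesis (\ref{condition}) gives $\ma E[|\tilde c_p|^2]=O(\ma E[|c_p|^2])$, as required. I expect the only nontrivial step to be the dyadic splitting in the previous paragraph, which is however routine for Schwartz-function estimates; everything else (existence of $\tilde c_p$, the Gaussian character, independence, Hermitian symmetry, and the Fourier identification of $\sum_j\delta\tau_j$) is a formal consequence of stability of Gaussianity and independence under $L^2$-convergent sums.
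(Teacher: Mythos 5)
Your proof is correct, and its key estimate is in fact sharper than the paper's. Both arguments share the same skeleton: by independence of the $c_{p,j}$ over $j$, one has $\ma E[|\tilde c_p|^2]=\sum_{j\geq1}\widehat{K_j}(2\pi p)=\sum_{j\geq1}M^{-j(2k+2+\epsilon)}\widehat{K_{\text{init}}}(2\pi p/M^j)$, and everything reduces to bounding this sum (the preliminary facts you record --- $L^2$ and almost sure convergence of $\sum_j c_{p,j}$, preservation of Gaussianity, independence and Hermitian symmetry --- are the same routine points the paper uses implicitly). Where you differ is in the estimate itself. The paper uses a single uniform polynomial bound $\widehat{K_{\text{init}}}(\xi)\leq C\langle\xi\rangle^{-(2k+2+\frac\epsilon2)}$, with the exponent deliberately taken slightly below $2k+2+\epsilon$ so that the resulting series $\sum_j M^{-j\epsilon/2}$ converges; this yields $\ma E[|\tilde c_p|^2]=O\bigl(p^{-(2k+2+\frac\epsilon2)}\bigr)$. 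Your dyadic splitting at $j_p\approx\log p/\log M$ --- boundedness of $\widehat{K_{\text{init}}}$ for $j\geq j_p$, rapid decay of order $N>2k+2+\epsilon$ for $j<j_p$, each half dominated by its term at $j\approx j_p$ --- gives the stronger $O\bigl(p^{-(2k+2+\epsilon)}\bigr)$. This is not merely cosmetic: condition (\ref{condition}) only supplies the lower bound $\ma E[|c_p|^2]\geq Cp^{-(2k+2+\epsilon)}$, so your exponent matches it exactly and the conclusion $\ma E[|\tilde c_p|^2]=O(\ma E[|c_p|^2])$ follows at once, whereas the paper's exponent $2k+2+\frac\epsilon2$ leaves an unaccounted factor $p^{\epsilon/2}$ in its final step (which would require the lower bound of (\ref{condition}) to hold with $\epsilon/2$ in place of $\epsilon$). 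Your version of the estimate is therefore the one to keep.
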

 \begin{proof}
 We have seen in Eq.(\ref{Fourier}) that
 \[\widehat{K_j}(\xi) = \frac 1 {M^{j(2k+2+\epsilon)}} \widehat{K_{\text{init}}}(\frac\xi{M^j}).\]
 Since $K_{\text{init}}$ is smooth, there exists a constant $C>0$ such that
 \[\forall\xi\in\ma R, \widehat{K_{\text{init}}}(\xi)\leq \frac C{\left\langle\xi\right\rangle^{2k+2+\frac\epsilon 2}},\]
 with the usual notation $\left\langle\xi\right\rangle=\sqrt{1+\xi^2}\geq\abs\xi$.
 Thus,
\[\begin{split}
    \ma E\left[|c_{p,j}|^2\right]&=\frac{1}{M^{j(2k+2+\epsilon)}}\widehat{K_{\text{init}}}(\frac{2\pi p}{M^j})\\
    &\leq \frac{C}{M^{j\frac\epsilon 2}}\frac1{\abs{2\pi p}^{2k+2+\frac\epsilon2}}.
\end{split}\]
Consequently, since by independence
\[\ma E\left[\left|\tilde c_p\right|^2\right] = \ma E\left[\left|\sum_{j\geq1}c_{p,j}\right|^2\right]=\sum_{j\geq1}\ma E\left[\left|c_{p,j}\right|^2\right], \]
\[\ma E\left[\left|\tilde c_p\right|^2\right] =O\left(\frac{1}{\abs{2\pi p}^{2k+2+\frac\epsilon 2}}\right)\underset{(\ref{condition})}=O(\ma E\left[|c_p|^2\right]).\]

\end{proof}
Thus, fixing a constant $C$ such that
\[C\ma E\left[|c_p|^2\right]\geq \ma E\left[\left|\tilde c_p\right|^2\right] ,\]
we can define a random Gaussian field $\delta\tau_0=\sum_{p\in\ma Z}c_{p,0}e^{2i\pi p\cdot}$ with coefficients $c_{p,0}$ independent from the $c_{p,j}$ such that
\[\ma E\left[|c_{p,0}|^2\right]=C\ma E\left[|c_p|^2\right]-\ma E\left[\left|\tilde c_p\right|^2\right].\]
This way $\frac 1 C \sum_{j\geq0}\delta\tau_j$ and $\delta\tau$ have the same law.  By this we mean that their Fourier coefficients have the same laws.  By our hypothesis, the convergence of the Fourier series are almost surely normal, thus for any finite subset $\{x_k\}_k$ of $\ma T$, $(\frac 1 C\sum\delta\tau_j(x_k))_k$ and $(\delta\tau(x_k))_k$ have the same law.  Therefore, the laws of $\mathrm Tr^\flat(\mc L^n_{\xi,\tau_0+\delta\tau})$ and $\mathrm Tr^\flat(\mc L^n_{\xi,\tau_0+\frac1 C\sum\delta\tau_j})$ are the same, and the convergence of Theorem \ref{main} is equivalent to 
\begin{equation}
    \label{convergence bis}
    A_n\mathrm Tr^\flat(\mc L^n_{\xi,\tau_0+\sum\delta\tau_j})\overset{\mc L}\longrightarrow\mc N_{\ma C}(0,1)
\end{equation}
under condition (\ref{condition}). (The constant $\frac1C$ can be 'absorbed' in $\xi$ up to the replacement of $\tau_0$ by $C\tau_0$ that has no consequence.)
In the rest of the paper we will show (\ref{convergence bis}) and will write \begin{equation}
\label{defi tau}
    \tau:=\tau_0+\sum_{j\geq 0}\delta\tau_j.
\end{equation}
\subsection{New expression for $\mathrm{Tr}^\flat(\mc L^n_{\xi,\tau})$}
 
We will write the set of periodic orbits of (non primitive)  period $n$  as
\begin{equation}\label{Pern}
    \mathrm{Per}(n):=\left\{\{x,E(x),\cdots,E^{n-1}(x)\}, E^n(x)=x, x \in \ma T\right\},
\end{equation}
and the set of periodic orbits of primitive  period  $n$ as
\begin{equation}\label{Pm}
    \mathcal{P}_n:=\left\{\{x,E(x),\cdots,E^{n-1}(x)\}, n =\min\{k\in\ma N^*,E^k(x)=x\} , x \in \ma T \right\}.
\end{equation}
This way, $\mathrm{Per}(n)$ is the disjoint union
\begin{equation}\label{prime}
    \mathrm{Per}(n)=\coprod_{m|n}\mc P_m.
\end{equation}
Let us rewrite the sum $\mathrm{Tr}^\flat(\mc L^n_{\xi,\tau})$, where $\tau $ is given by (\ref{defi tau}).  We know from (\ref{trace}) that
\[\begin{split}
    \mathrm{Tr}^\flat(\mc L^n_{\xi,\tau})&= \sum\limits_{E^n(x)=x}\frac{e^{i\xi\tau_x^n}}{{(E^n)'(x)}-1}\\
                                         &=\sum\limits_{E^n(x)=x}\frac{e^{i\xi\tau_x^n}}{{e^{J^n_x}-1}},
\end{split}\]
where $J(x) = \log (E'(x))>0$ and $J_x^n$ is the Birkhoff sum as defined in (\ref{birkhoff}).
If $f^n_O$ stands for the Birkhoff sum $f^n_x$ for any $x\in O$ , let us write
\begin{equation}
\label{Tr_F}
\mathrm{Tr}^\flat(\mc L^n_{\xi,\tau})= \sum_{m|n}m\sum_{O\in \mc P_m}\frac{e^{i\xi\tau^n_O}}{e^{J^n_O}-1}.    
\end{equation}

For $O\in \mathrm{Per}(n)$, we can write
\[\tau^n_O = (\delta\tau_n)^n_O + \sum_{j\neq n}(\delta\tau_j)^n_O + (\tau_0)^n_O.\]
Since the covariance function $K_n$ is supported in $\left[-\frac 1{3 M^n},\frac1{3M^n}\right]$, we deduce from Lemma \ref{periodic} and (\ref{equation covariance}) that the values taken by $\delta\tau_n$ at different periodic points of period dividing $n$, which have law $\mc N(0, K_n(0))$  are independent random variables.\newline 
Thus, for $n\in\ma N$, $m|n$ and $O\in\mc P_m$, $(\delta\tau_n)^m_O$ is a centered Gaussian random variable of variance $mK_n(0)$, and $(\delta\tau_n)^n_O=\frac nm(\delta\tau_n)^m_O$ has variance $(\frac{n}{m})^2mK_n(0) = \frac{n^2}{m}K_n(0)$.  

\begin{defi}
\label{C}
We say that two families of real random variables $(X_O^n)_{\begin{subarray}{l}n\geq 1  \\O\in \mathrm{Per}(n)\end{subarray} }$ and $(Y_O^n)_{\begin{subarray}{l}n\geq 1  \\O\in \mathrm{Per}(n)\end{subarray} }$   satisfy condition (C) if 
\begin{enumerate}
    \item for every $m|n$, and $O\in\mc P_m$, $X^n_O$ has law $\mc N(0,\frac {n^2} m K_n(0))$,
    \item for every  $O'\neq O\in\mathrm{Per}(n)$ and every $O''\in\mathrm{Per}(n)$,  $X^n_O $ is independent of $X^n_{O'}$ and  $Y^n_{O''}$.
\end{enumerate}
\end{defi}

Writing $X^n_O = (\delta\tau_n)^n_O $ and $Y^n_O = \sum_{j\neq n}(\delta\tau_j)^n_O + (\tau_0)^n_O$, we have obtained 
\begin{lemma}\label{orbit}
There exist families of random variables $(X^n_O), (Y^n_O)$ satisfying condition (C) of Definition (\ref{C}) such that for every $n\geq 1$ and $O\in\mathrm{Per}(n)$\begin{equation}
\label{tau_On}
    \tau^n_O= X^n_O+Y^n_O
\end{equation}
\end{lemma}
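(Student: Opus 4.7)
The plan is to take as candidates the natural choices $X^n_O := (\delta\tau_n)^n_O$ and $Y^n_O := (\tau_0)^n_O + \sum_{j \neq n}(\delta\tau_j)^n_O$ already singled out in the paragraphs preceding the statement, and to verify that they satisfy both requirements of condition (C). The identity $\tau^n_O = X^n_O + Y^n_O$ is immediate from the linearity of Birkhoff sums applied to the decomposition $\tau = \tau_0 + \sum_{j\geq 0}\delta\tau_j$ in (\ref{defi tau}), with the series in $Y^n_O$ converging almost surely because $\tau$ is almost surely in $\mc C^1$.

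For item (1) of condition (C), the decisive input is the support property of $K_n$. By construction $K_n$ is supported in $[-\frac{1}{3M^n}, \frac{1}{3M^n}]$, while by Lemma \ref{periodic} any two distinct fixed points of $E^n$ are separated on $\ma T$ by strictly more than $\frac{1}{3M^n}$. The family $\{\delta\tau_n(x) : E^n(x) = x\}$ is therefore jointly Gaussian with pairwise vanishing covariances, hence i.i.d.\ of marginal law $\mc N(0, K_n(0))$. For an orbit $O \in \mc P_m$ with $m \mid n$, writing $X^n_O = (n/m)(\delta\tau_n)^m_O$ expresses it as $n/m$ times a sum of $m$ i.i.d.\ $\mc N(0, K_n(0))$ variables, giving the required variance $\frac{n^2}{m}K_n(0)$.

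For item (2), independence of $X^n_O$ and $X^n_{O'}$ for distinct orbits $O, O' \in \mathrm{Per}(n)$ follows from the same covariance-vanishing argument applied jointly to $O \cup O'$, whose combined points remain pairwise separated by more than $\frac{1}{3M^n}$. Independence of $X^n_O$ from $Y^n_{O''}$ reduces to a bookkeeping check on the underlying Gaussian families: $X^n_O$ is a measurable function of the Fourier coefficients $\{c_{p,n}\}_{p\in\ma Z}$ of $\delta\tau_n$, whereas $Y^n_{O''}$ is a function of $\tau_0$ (deterministic) together with $\{c_{p,j}\}_{p\in\ma Z,\, j\neq n}$ and $\{c_{p,0}\}_{p\in\ma Z}$, and the construction of the previous subsection made all these Gaussian collections mutually independent.

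The only nontrivial geometric input I expect to need is Lemma \ref{periodic}, which supplies the separation of distinct fixed points of $E^n$ on a scale larger than the effective support width $\frac{1}{3M^n}$ of $K_n$; this is the one place where the expanding dynamics really enters. Everything else then reduces to two elementary principles: that uncorrelated jointly Gaussian variables are independent, and that measurable functions of disjoint families of independent Gaussians are independent.
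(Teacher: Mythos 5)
Your proposal is correct and follows exactly the paper's argument: the same choice $X^n_O=(\delta\tau_n)^n_O$, $Y^n_O=(\tau_0)^n_O+\sum_{j\neq n}(\delta\tau_j)^n_O$, the same use of the support of $K_n$ in $[-\tfrac{1}{3M^n},\tfrac{1}{3M^n}]$ together with the separation $\tfrac{1}{M^n-1}$ from Lemma \ref{periodic} to decorrelate the values of $\delta\tau_n$ at distinct fixed points of $E^n$, and the same independence bookkeeping for the coefficients $c_{p,j}$. If anything, you spell out the variance computation and the independence of $X^n_O$ from $Y^n_{O''}$ slightly more explicitly than the paper does.
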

In order to adapt the proof of lemma \ref{indep}, we want to show that for large $\xi$, the random variables $e^{i\xi (X^n_O+Y^n_O)}, O\in\mathrm{Per}(n)$ are close to be independent and uniform  on $S^1$.

\begin{remark}
\label{Char_function}
We have
\begin{equation}
\label{Tr_F2}
\mathrm{Tr}^\flat(\mc L^n_{\xi,\tau}) \underset{(\ref{Tr_F}),(\ref{tau_On})}= \sum_{m|n}m\sum_{O\in \mc P_m}\frac{e^{i\xi(X_O^n + Y_O^n)}}{e^{J^n_O}-1}.
\end{equation}

Our aim is to approximate the characteristic function of $A_n\mathrm{Tr}^\flat(\mc L^n_{\xi,\tau})$ which is the expectation of
\begin{multline}
    \label{BE2}
\exp\left(iA_n \left( 
\mu \mathrm{Re}(\mathrm{Tr}^\flat(\mc L^n_{\xi,\tau}))
+ \nu \mathrm{Im}(\mathrm{Tr}^\flat(\mc L^n_{\xi,\tau}))
\right)\right)   
=\\
\prod\limits_{m|n}\prod\limits_{O\in\mc P_m}
\exp\left[i\frac{mA_n}{e^{J_O^n}-1}  \left(
 \mu\cos(\xi(X_O^n + Y_O^n))+ \nu\sin(\xi(X_O^n + Y_O^n)) 
\right)\right]
\end{multline}
    for fixed, $\mu,\nu\in\ma R.$
The right hand side of (\ref{BE2}) can be written as
\[\prod\limits_{m|n}\left(\prod\limits_{O\in\mc P_m} f_O\left(e^{i\xi(X^n_{O}+Y^n_{O})}\right)\right)^m\]
for some continuous functions $f_O:S^1\longrightarrow\ma C$ (depending on $\mu,\nu$):
\[f_O(z)=\exp\left[i\frac{A_n}{e^{J_O^n}-1}\left(\mu\mathrm{Re}(z)+\nu\mathrm{Im}(z)\right)\right]\] In the next Lemma we first consider indicator functions on $S^1$ for $f_O$.
\end{remark}

\begin{lemma}\label{2.5}
 Let $(X_O^n)_{\begin{subarray}{l}n\geq 1  \\O\in Per(n)\end{subarray} }$ and $(Y_O^n)_{\begin{subarray}{l}n\geq 1  \\O\in Per(n)\end{subarray} }$ be two families of real random variables satisfying  satisfying condition (C) of Definition (\ref{C}).  Assume that  $n$ and $\xi$ satisfy (\ref{cond2}).
Then there is a constant $C>0$ such that for every $n\in\ma N$ and every real numbers $(\alpha_O)_{O\in\mathrm{Per}(n)},(\beta_O)_{O\in\mathrm{Per}(n)} $ such that \[\forall O\in\mathrm{Per}(n),\ 0<\beta_O-\alpha_O<2\pi,\] for every complex numbers $(\lambda_O)_{O\in\mathrm{Per}(n)}$,
if $A_O:=e^{i]\alpha_O,\beta_O[}\subset S^1\subset\ma C$   and $\mathds 1_{A_O}:S^1\rightarrow\ma C$ is the characteristic function of  $A_O$, we have
\begin{equation}
   \label{complique} 
\left|\frac{\ma E\left[\prod\limits_{m|n}\left(\prod\limits_{O\in\mc P_m}\lambda_O\mathds 1_{A_O}\left(e^{i\xi(X^n_{O}+Y^n_{O})}\right)\right)^m\right]}{\prod\limits_{m|n}\prod\limits_{O\in\mc P_m}\lambda_O^m \left( \frac{\beta_O-\alpha_O}{2\pi} \right)}-1\right|\leq \xi^{c-1}n^{-\frac12}\left(\underset{(\ref{cond2})}\to0\right).
\end{equation}

\end{lemma}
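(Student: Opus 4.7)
The strategy is to condition on the family $\mathbf{Y} := (Y^n_O)_O$, use condition~(C) to factorize the resulting conditional expectation, and then handle each orbit factor by Fourier analysis on $S^1$ with the Gaussian characteristic function.

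First I would observe that the exponent $m$ is harmless: $\mathds{1}_{A_O}^m = \mathds{1}_{A_O}$ and the deterministic $\lambda_O^m$ cancel in the ratio. So the claim reduces to showing
\[\left|\frac{\ma E\!\left[\prod_{O\in\mathrm{Per}(n)}\mathds{1}_{A_O}\!\left(e^{i\xi(X^n_O+Y^n_O)}\right)\right]}{\prod_{O\in\mathrm{Per}(n)} p_O} - 1\right| \leq \xi^{c-1} n^{-1/2},\]
with $p_O := (\beta_O - \alpha_O)/(2\pi)$. By condition~(C)(2), $(X^n_O)_O$ is mutually independent and jointly independent of $\mathbf{Y}$, so conditioning on $\mathbf{Y}$ factorizes the numerator as $\prod_O \ma E[\mathds{1}_{A_O}(e^{i\xi(X^n_O+Y^n_O)}) \mid Y^n_O]$.

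For a single orbit of primitive period $m$, (C)(1) gives $X^n_O \sim \mc N(0, \sigma_m^2)$ with $\sigma_m^2 := \frac{n^2}{m}K_n(0) = \frac{n^2}{m M^{n(2k+1+\epsilon)}}$. The conditional density of $e^{i\xi(X^n_O + Y^n_O)}$ on $S^1$ given $Y^n_O$ is
\[\rho_m(\theta - \xi Y^n_O) = \frac{1}{2\pi}\sum_{j\in\ma Z} e^{-j^2\xi^2\sigma_m^2/2}\, e^{ij(\theta - \xi Y^n_O)},\]
and a geometric tail bound yields $\|\rho_m - \frac{1}{2\pi}\|_\infty \leq C e^{-\xi^2\sigma_m^2/2}$ as soon as $\xi^2\sigma_m^2 \geq 2$. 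Integrating against $\mathds{1}_{A_O}$ then gives
\[\ma E\!\left[\mathds{1}_{A_O}(e^{i\xi(X^n_O+Y^n_O)}) \mid Y^n_O\right] = p_O\bigl(1 + \eta_O(\mathbf{Y})\bigr), \quad |\eta_O(\mathbf{Y})| \leq C e^{-\xi^2 \sigma_m^2/2}.\]
The proportionality of the error to $p_O$ itself is crucial: a direct Fourier expansion of $\mathds{1}_{A_O}$ only yields an additive bound $|r_O| \leq C e^{-\xi^2\sigma_m^2/2}$, useless for arbitrarily short arcs, but the density bound combined with $|\int_{A_O}(\rho_m - \frac{1}{2\pi})\,d\theta| \leq |A_O|\,\|\rho_m-\frac{1}{2\pi}\|_\infty$ recovers the required factorized form.

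Taking the product over orbits, $\bigl|\prod_O(1+\eta_O) - 1\bigr| \leq \exp(\sum_O|\eta_O|) - 1 \leq 2\sum_O|\eta_O|$ once the sum is bounded. It then remains to estimate $\sum_{m\mid n}\#\mc P_m\cdot e^{-\xi^2\sigma_m^2/2}$ using $\#\mc P_m \leq l^m/m$; the dominant $m = n$ term contributes at most $l^n e^{-\xi^2 n/(2 M^{n(2k+1+\epsilon)})}$, and under condition~(\ref{cond2}) one has $\xi^2/M^{n(2k+1+\epsilon)} \geq l^{2n/c} M^{n(2k+1+\epsilon)(1/c-1)}$, which grows exponentially in $n$ since $c<1$, making the whole sum super-exponentially small and a fortiori bounded by $\xi^{c-1}n^{-1/2}$. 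Since the resulting bound is deterministic in $\mathbf{Y}$, taking expectation over $\mathbf{Y}$ yields the lemma. I expect the main obstacle to be establishing the multiplicative form $|\eta_O| \leq C e^{-\xi^2\sigma_m^2/2}$ rather than an additive one; once that is in hand, the product estimate and the invocation of~(\ref{cond2}) are routine.
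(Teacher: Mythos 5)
Your proof is correct and follows the same overall architecture as the paper's (condition on $\mathbf Y$, factorize over orbits using condition (C), establish a per-orbit multiplicative error, multiply and invoke (\ref{cond2}) with $\#\mathrm{Per}(n)\leq l^n$), but the key per-orbit estimate is obtained by a genuinely different route. The paper proves a separate real-variable lemma (Lemma \ref{3}): it compares $e^{-x^2/2}$ on consecutive intervals of length $2\pi/\sigma_{n,\xi}$ via the mean value inequality and averages, yielding a relative error $|\epsilon_O|\leq C/\sigma_{n,\xi}$, i.e.\ polynomial in $1/\sigma$. You instead expand the wrapped Gaussian density of $\xi(X^n_O+Y^n_O)\bmod 2\pi$ in Fourier series and bound the nonzero modes geometrically, getting $|\eta_O|\leq Ce^{-\xi^2\sigma_m^2/2}$ — an exponentially small relative error. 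You correctly identify the crucial point that the error must be \emph{proportional to} $(\beta_O-\alpha_O)/2\pi$ (which the sup-norm bound on the density delivers), exactly as the paper's Lemma \ref{3} delivers a bound proportional to $\beta-\alpha$. Your bound is strictly stronger, so the final product estimate goes through with enormous margin; note though that the paper's weaker $O(1/\sigma_{n,\xi})$ bound is precisely what produces, after multiplying over $l^n$ orbits, the stated error $l^nM^{n(k+\frac12+\frac\epsilon2)}/(\xi\sqrt n)\leq\xi^{c-1}n^{-\frac12}$, whereas in your version this displayed rate is valid but far from what you actually prove. Two small points to tidy up: verify that $\xi^2\sigma_m^2\geq 2$ under (\ref{cond2}) (it does, since $\xi^2 K_n(0)\geq l^{2n/c}M^{n(2k+1+\epsilon)(1/c-1)}$), and when asserting that your super-exponentially small bound is dominated by $\xi^{c-1}n^{-1/2}$, remember that $\xi$ may be arbitrarily large for fixed $n$ — this is harmless because $e^{-c'\xi^2 K_n(0)}$ decays in $\xi$ faster than any power, but it deserves a line. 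Both proofs tacitly upgrade the pairwise independence in condition (C) to joint independence of $(X^n_O)_O$ from each other and from $\mathbf Y$; this is how the paper reads its own definition, so you are on equal footing there.
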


\begin{remark}
In this expression, we compare the law of the family of random variables $\left(e^{i\xi(X_O^n+Y_O^n)}\right)_{O\in\mathrm{Per}(n)}$ to the uniform law on the torus of dimension $\# \mathrm{Per}(n)$.  The proof of this lemma is given  in the next subsection.
 
\end{remark}

\subsection{A normal law of large variance on the circle is close to uniform}
We will need the following lemma, which evaluates how much the law $\mc N(0,1)\mod \frac 1 t$ differs from the uniform law on the circle $\mathbb{R}/(\frac{1}{t} \mathbb{Z})$ for large values of $t$.
\begin{lemma}\label{3}
There exists a constant $C>0$ such that for every real numbers $\alpha,\beta$, such that $0<\beta-\alpha<2\pi$ and every real number $t\geq 1$,
\[\left|\int_{\ma R}\sum_{k\in\ma Z}\mathds 1_{\frac{\alpha+2k\pi}t\leq x\leq\frac{\beta+2k\pi}t}e^{-\frac{x^2}{2}}\frac{dx}{\sqrt{2\pi}}-\frac{\beta-\alpha}{2\pi}\right|\leq\frac C t (\beta-\alpha).\]
 \begin{figure}[h] 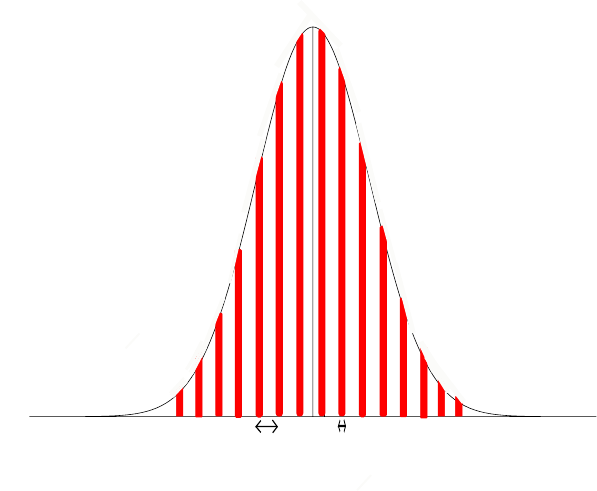 \caption{As $t$ goes to infinity, the red area converges to $\frac{\beta-\alpha}{2\pi}$ with speed $O(\frac {\beta-\alpha} t).$} \label{fig:dessin} \end{figure}
\end{lemma}
\begin{proof}
By mean value inequality, if $|x-y|\leq 1$, then
\[\left|e^{-\frac{x^2}2}-e^{-\frac{y^2}2}\right|\leq |x-y| f(y)\]
for the $L^1$ function  \[f(y):=\sup_{|u-y|\leq 1} |u|e^{-\frac{u^2}2}\]

Let us then write for $u\in[\frac \alpha t,\frac \beta t]$, $u_k:=u+\frac{2k\pi}{t}$ and $I_k:=[u_k,u_{k+1}]$.
We have just seen that for $t\geq 2\pi$, for all $y\in I_k$,
\[\left|e^{-\frac{u_k^2}2}-e^{-\frac{y^2}2}\right|\leq \frac C t f(y).\]
Integrating over $y\in I_k$ of length $\frac{2\pi}{t}$ and summing over $k\in\ma Z$ yields
\[\left|\frac{2\pi}{t}\sum_{k\in\ma Z}e^{-\frac{u_k^2}2}-\sqrt{2\pi}\right|\leq \frac Ct\]
(The value of the constant $C$ changes at each line, but it depends neither on $t$, nor on $\alpha,\beta$.)
Averaging over $u\in[\frac\alpha t,\frac\beta t]$ gives
\[\left|\frac{2\pi}{\beta-\alpha}\sum_{k\in\ma Z}\int_{\frac\alpha t}^{\frac\beta t}\exp\left(-\frac{(u-2k\pi)^2}2\right)du-\sqrt{2\pi}\right|\leq\frac Ct.\]
Consequently,
\[\left|\int_{\ma R}\sum_{k\in\ma Z}\mathds 1_{\frac{\alpha+2k\pi}t\leq x\leq\frac{\beta+2k\pi}t}e^{-\frac{x^2}{2}}\frac{dx}{\sqrt{2\pi}}-\frac{\beta-\alpha}{2\pi}\right|\leq\frac Ct(\beta-\alpha)\]
\end{proof}
\begin{proof}[Proof of lemma \ref{2.5}]

Let us denote by $E$ the expectation
\[E:=\ma E\left[\prod\limits_{m|n}\left(\prod\limits_{O\in\mc P_m}\lambda_O\mathds 1_{A_O}\left(e^{i\xi(X_O^n+Y_O^n)}\right)\right)^m\right].\]
If we write respectively $\ma P_X$, $\ma P_Y$ and $\ma P_{X,Y}$ the probability laws of the variables $(\xi X^n_O)_{O\in\mathrm{Per}(n)}$, $(\xi Y^n_O)_{O\in\mathrm{Per}(n)}$ and  $(\xi X^n_O)_{O\in\mathrm{Per}(n)}\cup(\xi Y^n_O)_{O\in\mathrm{Per}(n)}$ respectively, then  condition (C) of Definition (\ref{C}) implies
\begin{multline}
    d\ma P_{X,Y}((x_O)_{O\in\mathrm{Per}(n)},(y_O)_{O\in\mathrm{Per}(n)}) =\\ \prod\limits_{m|n}\prod\limits_{O\in\mc P_m}e^{-\frac{{x_O}^2}{2\sigma^2_{n,\xi}}}\frac{dx_O}{\sigma_{n,\xi}\sqrt{2\pi}}\otimes d\ma P_Y((y_O)_{O\in\mathrm{Per}(n)}).
\end{multline}
with the variance $\sigma^2_{n,\xi}:=\xi^2\frac{n^2}mK_n(0)$.
We have
\begin{multline}
    E=\int_{\ma R^{2\#\mathrm{Per}(n)}}\prod\limits_{{O\in\mathrm{Per}(n)}}\left(\sum_{k\in\ma Z}\lambda_O^m\mathds 1_{]\alpha_O+2k\pi,\beta_O+2k\pi[}(x_O+y_O)\right)\\d\ma P_{X,Y}((x_O)_{O\in\mathrm{Per}(n)},(y_O)_{O\in\mathrm{Per}(n)}).
\end{multline}
Thus, writing  $u_O=\frac{x_O}{\sigma_{n,\xi}}$ for $O\in\mc P_m$,
\begin{multline}
    E=\int_{\ma R^{\#\mathrm{Per}(n)}}\prod\limits_{m|n}\prod\limits_{O\in\mc P_m}\left(\int_{\ma R}\sum_{k\in\ma Z}\lambda_O^m\mathds 1_{\left]\frac{\alpha_O-y_O+2k\pi}{\sigma_{n,\xi}},\frac{\beta_O-y_O+2k\pi}{\sigma_{n,\xi}}\right [}(u_O)e^{-\frac{u_O^2}2}\frac{du_O}{\sqrt{2\pi}}\right)\\d\ma P_{Y}((y_O)_{O\in\mathrm{Per}(n)}) .
\end{multline}
Let us write for $O\in\mathrm{Per}(n)$
\[I_O = \int_{\ma R}\sum_{k\in\ma Z}\lambda_O^m\mathds 1_{\left]\frac{\alpha_O-y_O+2k\pi}{\sigma_{n,\xi}},\frac{\beta_O-y_O+2k\pi}{\sigma_{n,\xi}}\right [}(u_O)e^{-\frac{u_O^2}2}\frac{du_O}{\sqrt{2\pi}}.\]
Lemma \ref{3} yields
\[I_O=\lambda_O^m\frac{\beta_O-\alpha_O}{2\pi}\left(1+\epsilon_O\right),\]
where
\[\begin{split}
    \exists C>0,\abs{\epsilon_O}&\leq\frac{C}{\sigma_{n,\xi}}\\
    &\leq\frac{C}{\xi\sqrt{nK_n(0)}}.
\end{split}\]
Let us remark that for every finite family $\{x_k\}_k\subset\ma R$, the expansion of the product and factorization after triangular inequality give
\[\left|\prod_k(1+x_k)-1\right|\leq\prod_k(1+|x_k|)-1.\]
Thus, 
\[\begin{split}
    \left|\frac{\prod\limits_{m|n}\prod\limits_{O\in\mc P_m}I_O}{\prod\limits_{m|n}\prod\limits_{O\in\mc P_m}\lambda_O^m\left(\frac{\beta_O-\alpha_O}{2\pi}\right)}-1\right|&=\left|\prod\limits_{m|n}\prod\limits_{O\in\mc P_m}(1+\epsilon_O)-1\right|\\
    &\leq \left(\left(1+\frac{C}{\xi(nK_n(0))^{1/2}}\right)^{\#\mathrm{Per}(n) }-1\right).
\end{split}\]
From Lemma \ref{periodic} we have $\#\mathrm{Per}(n) \leq l^n$.

Using hypothesis (\ref{cond2}) we can bound the prefactor:
\[\begin{split}\left(1+\frac{C}{\xi(nK_n(0))^{1/2}}\right)^{l^n}-1&\underset{(\ref{kj}),(\ref{K(0)})}=\left(1+\frac{CM^{n(k+\frac12+\frac\epsilon2)}}{\xi\sqrt n}\right)^{l^n}-1\\
&\leq\exp(l^n\frac{CM^{n(k+\frac12+\frac\epsilon2)}}{\xi\sqrt n})-1\\
&\leq C'\frac{l^nM^{n(k+\frac12+\frac\epsilon2)}}{\xi\sqrt n}
 \end{split}\]
 for some $C'>0$ for $n$ and $\xi$ large enough  and satisfying (\ref{cond2}) since
 \begin{equation}\label{tendvers0}
     l^n\frac{CM^{n(k+\frac12+\frac\epsilon2)}}{\xi\sqrt n}\underset{(\ref{cond2})}{\leq}C\xi^{c-1}n^{-\frac12}\to 0.
 \end{equation}
\end{proof}
\subsection{End of proof}
We can now easily extend the lemma \ref{2.5} from characteristic functions to step functions.
\begin{corollary}\label{cor}
Assume that  $n$ and $\xi$ satisfy  (\ref{cond2}).  For any families $(X_O^n)_{\begin{subarray}{l}n\geq 1  \\O\in \mathrm{Per}(n)\end{subarray} }$ and $(Y_O^n)_{\begin{subarray}{l}n\geq 1  \\O\in \mathrm{Per}(n)\end{subarray} }$ of real random variables  satisfying condition (C) of Definition (\ref{C}), there exists $C>0$ such that, if $(f_{n,O})_{\begin{subarray}{l}n\geq 1  \\O\in \mathrm{Per}(n)\end{subarray}}$ is a family of step functions $S^1\to\ma R$, then
\begin{multline} \label{BE1} \left|\ma E\left[\prod\limits_{m|n}\prod\limits_{O\in\mc P_m}f_{n,O}^m(e^{i\xi(X_O^n+Y_O^n)})\right]-\prod\limits_{m|n}\prod\limits_{O\in\mc P_m}\int f_{n,O}^md\mathrm{Leb}\right|\\\leq C\xi^{c-1}n^{-\frac12}\prod\limits_{m|n}\prod\limits_{O\in\mc P_m}\int \abs{f_{n,O}^m}d\mathrm{Leb}.\end{multline}
\end{corollary}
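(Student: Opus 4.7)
The strategy is to reduce the step-function statement to Lemma \ref{2.5} by multilinear expansion. First, I decompose each step function as $f_{n,O} = \sum_{i\in I_O} \lambda_{O,i} \mathds{1}_{A_{O,i}}$ with $I_O$ finite and the arcs $A_{O,i} = e^{i]\alpha_{O,i},\beta_{O,i}[} \subset S^1$ pairwise disjoint and satisfying $0 < \beta_{O,i} - \alpha_{O,i} < 2\pi$. By disjointness of supports, $f_{n,O}^m = \sum_{i\in I_O} \lambda_{O,i}^m \mathds{1}_{A_{O,i}}$ pointwise.

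Next, expanding the full product by multilinearity yields
\[\prod_{m|n} \prod_{O\in\mc P_m} f_{n,O}^m(z_O) = \sum_{(i_O)} \prod_{m|n} \prod_{O\in\mc P_m} \lambda_{O,i_O}^m \mathds{1}_{A_{O,i_O}}(z_O),\]
where the outer sum is over tuples $(i_O)_{O\in\mathrm{Per}(n)}$ with $i_O \in I_O$. Since this sum is finite, it can be exchanged with the expectation on the left-hand side of (\ref{BE1}). Applying Lemma \ref{2.5} to each resulting expectation gives
\[\ma E\left[\prod_{m|n}\prod_{O\in\mc P_m} \lambda_{O,i_O}^m \mathds{1}_{A_{O,i_O}}(e^{i\xi(X_O^n+Y_O^n)})\right] = \prod_{m|n}\prod_{O\in\mc P_m} \lambda_{O,i_O}^m \mathrm{Leb}(A_{O,i_O})\,(1+\delta_{(i_O)}),\]
with $|\delta_{(i_O)}| \leq C\xi^{c-1}n^{-1/2}$ uniformly in the tuple, because the estimate in Lemma \ref{2.5} depends neither on the arcs nor on the scalars.

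Summing over tuples, the leading terms reassemble, by the reverse multilinear expansion, into $\prod_{m|n}\prod_{O\in\mc P_m} \int f_{n,O}^m\, d\mathrm{Leb}$, while the total error contribution is controlled, via the triangle inequality and the pointwise identity $|f_{n,O}^m| = |f_{n,O}|^m$, by
\[C\xi^{c-1}n^{-1/2} \sum_{(i_O)} \prod_{m|n}\prod_{O\in\mc P_m} |\lambda_{O,i_O}|^m \mathrm{Leb}(A_{O,i_O}) = C\xi^{c-1}n^{-1/2} \prod_{m|n}\prod_{O\in\mc P_m} \int |f_{n,O}|^m\, d\mathrm{Leb},\]
which is exactly (\ref{BE1}). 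The only potential obstacle is a degenerate arc in the decomposition (empty set or the full circle) which would fall outside the hypothesis of Lemma \ref{2.5}; empty pieces simply drop out, while a full-circle piece contributes a deterministic factor equal to its Lebesgue integral exactly, so such cases introduce no extra error and can be absorbed harmlessly into the same bound.
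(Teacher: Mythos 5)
Your proof is correct and follows essentially the same route as the paper: decompose each step function into scalar multiples of indicators of disjoint arcs, expand the product multilinearly, apply Lemma \ref{2.5} to each resulting term with a uniform relative error, and resum, using disjointness to identify $\sum_i |\lambda_{O,i}|^m\mathrm{Leb}(A_{O,i})$ with $\int|f_{n,O}|^m d\mathrm{Leb}$. Your explicit handling of degenerate arcs (empty or full circle, which fall outside the hypothesis $0<\beta_O-\alpha_O<2\pi$ of Lemma \ref{2.5}) is a small point the paper leaves implicit, and is dealt with correctly.
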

\begin{proof}
Let us write each  $f_{n,O}$ as \[f_{n,O}= \sum\limits_{q=1}^{p_{n,O}}\lambda_{n,O,q}\mathds 1_{A_{n,O,q}},\]
where the $\lambda_{n,O,q}$ are complex numbers and the $A_{n,O,q}, 1\leq q \leq p_{n,O}$ are disjoint intervals.  We develop (\ref{BE1}), we use Lemma \ref{2.5} and factorize the result:
\begin{multline*}E:=\ma E\left[\prod\limits_{m|n}\prod\limits_{O\in\mc P_m}f_{n,O}^m(e^{i\xi(X_O^n+Y_O^n)})\right]\\=\sum_{(q_O)\in\prod\limits_{m|n}\prod\limits_{O\in\mc P_m}\{1,\cdots,p_{n,O}\}}\ma E\left[\prod\limits_{m|n}\prod\limits_{O\in\mc P_m}\lambda_{n,O,q_O}^m\mathds 1_{A_{n,O,q_O}}(e^{i\xi(X_O^n+Y_O^n)})\right].\end{multline*}
Consequently,
\begin{multline*}
    \left|E-\prod\limits_{m|n}\prod\limits_{O\in\mc P_m}\int f_{n,O}^md\mathrm{Leb}\right|\\\leq\sum_{(q_O)\in\prod\limits_{m|n}\prod\limits_{O\in\mc P_m}\{1,\cdots,p_{n,O}\}}\left|\ma E\left[\prod\limits_{m|n}\prod\limits_{O\in\mc P_m}\lambda_{n,O,q_O}^m\mathds 1_{A_{n,O,q_O}}(e^{i\xi(X_O^n+Y_O^n)})\right]\right.-\\\left.\prod_{m|n}\prod_{O\in\mc P_m}\lambda_{n,O,q_O}^m\text{Leb}(A_{n,O,q_O})\right|\\\leq C\xi^{c-1}n^{-\frac12}\sum_{(q_O)\in\prod\limits_{m|n}\prod\limits_{O\in\mc P_m}\{1,\cdots,p_{n,O}\}}\prod_{m|n}\prod_{O\in\mc P_m}\abs{\lambda_{n,O,q_O}}^m\text{Leb}(A_{n,O,q_O})
\end{multline*}
from the previous lemma. \\
Hence,
\[\left|E-\prod\limits_{m|n}\prod\limits_{O\in\mc P_m}\int f_{n,O}^md\mathrm{Leb}\right|\leq C\xi^{c-1}n^{-\frac12}\prod\limits_{m|n}\prod\limits_{O\in\mc P_m}\int \abs{f_{n,O}}^md\mathrm{Leb}.\]
\end{proof}
We can use this result in order to estimate the characteristic function of $\mathrm{Tr}^\flat(\mc L^n_{\xi,\tau})$, using remark (\ref{Char_function}).
\begin{corollary}
\label{pC}
Assume that  $n$ and $\xi$ satisfy (\ref{cond2}).  Let $(X_O^n)_{\begin{subarray}{l}n\geq 1  \\O\in \mathrm{Per}(n)\end{subarray} }$ and $(Y_O^n)_{\begin{subarray}{l}n\geq 1  \\O\in \mathrm{Per}(n)\end{subarray} }$ be two families of real random variables satisfying condition (C) of Definition (\ref{C}).  There exists $C>0$ such that for all $(\mu_O,\nu_O)_{O\in\mathrm{Per}(n)}\in\ma R^{2\#\mathrm{Per}(n)}$,
\begin{multline*}\left|\ma E\left[\prod\limits_{m|n}\prod\limits_{O\in\mc P_m}e^{im\mu_O\cos(\xi(X_O^n+Y_O^n))+im\nu_O\sin(\xi(X_O^n+Y_O^n))}\right]\right.\\\left.-\prod\limits_{m|n}\prod\limits_{O\in\mc P_m}\int_0^{2\pi}e^{i(m\mu_O\cos\theta+m\nu_O\sin\theta)}\frac{d\theta}{2\pi}\right|\leq C\xi^{c-1}n^{-\frac12}.\end{multline*}

\end{corollary}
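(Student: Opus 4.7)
The plan is to deduce Corollary~\ref{pC} from Corollary~\ref{cor} by approximating the continuous functions
\[ f_{n,O}(z) := \exp\bigl(i\mu_O\operatorname{Re}(z)+i\nu_O\operatorname{Im}(z)\bigr),\qquad z\in S^1, \]
by step functions. With this choice $f_{n,O}^m\bigl(e^{i\xi(X_O^n+Y_O^n)}\bigr)$ reproduces the integrand on the left-hand side of the desired estimate, and $\int f_{n,O}^m\,d\mathrm{Leb}$ (with the normalized Lebesgue measure on $S^1$) reproduces the integral on the right-hand side.

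First I would approximate each $f_{n,O}$ uniformly by a step function $g_{n,O}$ with $\|f_{n,O}-g_{n,O}\|_\infty\leq\eta$, where $\eta>0$ is to be tuned at the end. Since $|f_{n,O}|=1$, by projecting the values of $g_{n,O}$ radially onto the closed unit disk one can further impose $|g_{n,O}|\leq 1$ without increasing the approximation error. The elementary identity $|a^m-b^m|\leq m|a-b|\max(|a|,|b|)^{m-1}$ then yields $\|f_{n,O}^m-g_{n,O}^m\|_\infty\leq m\eta$.

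Second I would combine a telescoping product inequality with Corollary~\ref{cor} and the triangle inequality. Using $\left|\prod a_O-\prod b_O\right|\leq\sum_O|a_O-b_O|$ for complex numbers with $|a_O|,|b_O|\leq 1$, and recalling that $\sum_{m\mid n} m\,\#\mc P_m\leq l^n$ by Lemma~\ref{periodic}, both approximation errors (one under $\ma E$, one between the products of integrals) are bounded by $l^n\eta$. Corollary~\ref{cor} applied to the step functions $g_{n,O}$ produces an error $C\xi^{c-1}n^{-1/2}\prod\int|g_{n,O}|^m\,d\mathrm{Leb}\leq C\xi^{c-1}n^{-1/2}$ since $|g_{n,O}|\leq 1$. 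Choosing $\eta:=l^{-n}\xi^{c-1}n^{-1/2}$ then balances the two contributions and finishes the proof.

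The main obstacle to watch for is the exponentially large number of factors (up to $l^n$) in the product: a naive approximation that only ensures $|g_{n,O}|\leq 1+\eta$ would compound into a factor $(1+\eta)^{l^n}$ in the bound from Corollary~\ref{cor}, forcing $\eta$ to be extravagantly small. Imposing $|g_{n,O}|\leq 1$ by truncation, which is costless precisely because $|f_{n,O}|=1$, is what keeps the approximation error \emph{additive} across orbits and allows the single moderate choice above to succeed; the remainder is routine bookkeeping.
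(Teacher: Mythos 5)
Your proof is correct and follows essentially the same route as the paper: approximate the continuous functions $f_{n,O}$ by step functions bounded by $1$ and invoke Corollary~\ref{cor}, then conclude by the triangle inequality. The only difference is that the paper makes the approximation softly (pointwise convergence plus dominated convergence, choosing an index $j_0$ with errors below $\xi^{c-1}n^{-1/2}$), whereas you make it quantitative with a uniform $\eta=l^{-n}\xi^{c-1}n^{-1/2}$; both are valid and the quantitative bookkeeping you carry out (the factor $m$ from the powers, the count $\sum_{m\mid n}m\,\#\mc P_m\leq l^n$) is sound.
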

\begin{proof}
Let $C$ be the constant from corollary \ref{cor}.
For $O\in\mathrm{Per}(n)$, let $f_{O}$ be the function defined on $S^1$ by
\[f_{O}(e^{i\theta})=e^{i(\mu_O\cos\theta+\nu_O\sin\theta)}.\]
Each $f_{O}$ is bounded by $1$, we can consequently find for each $O\in\mathrm{Per}(n)$ a family $(f_{j,O})_j$  of step functions uniformly bounded by $1$ converging pointwise towards $f_{O}$.\newline
We have for $n$ fixed, by dominated convergence \[E_j:=\ma E\left[\prod\limits_{m|n}\prod\limits_{O\in\mc P_m}f_{j,O}^m(e^{i\xi(X^n_{O}+Y^n_{O})})\right]\xrightarrow[j\to\infty]{}E:=\ma E\left[\prod\limits_{m|n}\prod\limits_{O\in\mc P_m}f^m(e^{i\xi(X^n_{O}+Y^n_{O})})\right]\]
as well as
\[I_j:=\prod\limits_{m|n}\prod\limits_{O\in\mc P_m}\int_0^{2\pi}f_{j,O}^m(e^{i\theta})\frac{d\theta}{2\pi}\xrightarrow[j\to\infty]{}I:=\prod\limits_{m|n}\prod\limits_{O\in\mc P_m}\int_0^{2\pi}f_{O}^m(e^{i\theta})\frac{d\theta}{2\pi}.\]
It is thus possible to find an integer $j_0$ such that both
\[\left|E-E_{j_0}\right|\leq \xi^{c-1}n^{-\frac12}\]
and
\[\left|I-I_{j_0}\right|\leq\xi^{c-1}n^{-\frac12}\]
hold.\newline
From corollary \ref{cor}, we know that for all $n\in\ma N$
\[\left|E_{j_0}-I_{j_0}\right|\leq C\xi^{c-1}n^{-\frac12}\sup\abs{f_{j_0}}.\]
Thus,
\[\begin{split}\abs{E-I}&\leq\left|E-E_{j_0}\right|+\left|E_{j_0}-I_{j_0}\right|+\left|I-I_{j_0}\right|\\
                        &\leq (C+2)\xi^{c-1}n^{-\frac12}.\end{split}\]
\end{proof}
We can know prove the final proposition :
\begin{proposition}\label{2.8}Let $(X_O^n)_{\begin{subarray}{l}n\geq 1  \\O\in \mathrm{Per}(n)\end{subarray} }$ and $(Y_O^n)_{\begin{subarray}{l}n\geq 1  \\O\in \mathrm{Per}(n)\end{subarray} }$ be two families of real random variables  satisfying condition (C) of Definition (\ref{C}).
If condition (\ref{cond2}) is satisfied then we have the following convergence in law
\begin{equation}
\label{Tnxi}
T_{n,\xi} := A_n\sum_{m|n}m\sum_{O\in\mc P_m}\frac{e^{i\xi(X^n_O+Y^n_O)}}{e^{J^n_O}-1}\underset{n,\xi\to\infty}{\longrightarrow}\mc N_{\ma C}(0,1 ),    
\end{equation}
with the amplitude $A_n$  defined  in (\ref{def_An}) by 
\begin{equation}\label{An}
    \begin{split}A_n&=\left(\sum_{m|n}m^2\sum_{O\in\mc P_m}\frac 1{(e^{J_O^n}-1)^2}\right)^{-\frac 12}.\end{split}
\end{equation}
\end{proposition}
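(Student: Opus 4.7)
The plan is to apply L\'evy's continuity theorem: it suffices to show that for every fixed $(\mu,\nu)\in\ma R^2$ the characteristic function
\[\phi_{n,\xi}(\mu,\nu):=\ma E\bigl[\exp\bigl(i\mu\,\mathrm{Re}(T_{n,\xi})+i\nu\,\mathrm{Im}(T_{n,\xi})\bigr)\bigr]\]
converges to $e^{-(\mu^2+\nu^2)/4}$, which under the conventions fixed earlier is the characteristic function of $\mc N_{\ma C}(0,1)$. Expanding $T_{n,\xi}$ into real and imaginary parts and setting
\[\mu_O:=A_n\mu/(e^{J^n_O}-1),\qquad \nu_O:=A_n\nu/(e^{J^n_O}-1),\]
Corollary \ref{pC} applied with these parameters yields
\[\bigl|\phi_{n,\xi}(\mu,\nu)-\Phi_n(\mu,\nu)\bigr|\leq C\xi^{c-1}n^{-1/2},\]
with
\[\Phi_n(\mu,\nu):=\prod_{m|n}\prod_{O\in\mc P_m}\int_0^{2\pi}e^{im(\mu_O\cos\theta+\nu_O\sin\theta)}\frac{d\theta}{2\pi}.\]
By condition (\ref{cond2}) the right-hand side vanishes as $n,\xi\to\infty$, so the task reduces to computing the limit of $\Phi_n(\mu,\nu)$.

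Each factor of $\Phi_n$ is a Bessel function: $\int_0^{2\pi}e^{i(a\cos\theta+b\sin\theta)}\frac{d\theta}{2\pi}=J_0(\sqrt{a^2+b^2})$. Writing $r^2:=\mu^2+\nu^2$ and $z_O:=mA_n r/(e^{J^n_O}-1)$ for $O\in\mc P_m$, one has $\Phi_n=\prod_O J_0(z_O)$, and the definition (\ref{An}) of $A_n$ provides the crucial identity
\[\sum_{m|n}\sum_{O\in\mc P_m}z_O^2=r^2 A_n^2\sum_{m|n}m^2\sum_{O\in\mc P_m}\frac{1}{(e^{J^n_O}-1)^2}=r^2.\]
Feeding the expansion $\log J_0(z)=-z^2/4+O(z^4)$, valid for small $|z|$, into the product yields
\[\log\Phi_n(\mu,\nu)=-\frac{r^2}{4}+O\!\Bigl(\sum_O z_O^4\Bigr)=-\frac{r^2}{4}+O\bigl(r^2\max_O z_O^2\bigr),\]
so $\Phi_n\to e^{-r^2/4}$ follows as soon as one verifies the Lindeberg-type smallness $\max_O z_O\to 0$.

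The main obstacle is precisely this last quantitative point: that no single periodic orbit dominates the sum defining $A_n^{-2}$. Individual terms $m^2/(e^{J^n_O}-1)^2$ are confined to a narrow range by the uniform bounds $m^n\leq(E^n)'(x)\leq M^n$ (with $m=\inf E'>1$), while the number of contributing orbits grows like $l^n$ by Lemma \ref{periodic}; pairing these with standard distortion/equidistribution estimates for periodic orbits of an expanding circle map ensures that a substantial fraction of the terms in $A_n^{-2}$ are of comparable order, forcing $\max_O z_O^2/r^2=o(1)$. Once this input is established, combining it with the Bessel asymptotics and Corollary \ref{pC} gives $\phi_{n,\xi}(\mu,\nu)\to e^{-(\mu^2+\nu^2)/4}$ for every $(\mu,\nu)$, and L\'evy's theorem yields the claimed convergence (\ref{Tnxi}).
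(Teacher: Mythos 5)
Your overall strategy coincides with the paper's: compute the characteristic function of $T_{n,\xi}$, replace it by the product of circle integrals via Corollary \ref{pC}, and expand each factor to second order, using the normalization $\sum_O z_O^2=r^2$ built into the definition of $A_n$. Your Bessel-function rewriting is equivalent to the paper's Taylor expansion of $\psi(\xi_1,\xi_2)=\int_0^{2\pi}e^{i(\xi_1\cos\theta+\xi_2\sin\theta)}\frac{d\theta}{2\pi}$ at the origin.

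However, the step you yourself flag as ``the main obstacle'' --- the Lindeberg-type condition $\max_O z_O\to0$ --- is exactly where the real content of the proof lies, and your justification of it does not hold up. First, the premise is wrong: the terms $m^2/(e^{J^n_O}-1)^2$ are not confined to a narrow range; they spread over exponentially many orders of magnitude, roughly from $e^{-2n\sup J}=M^{-2n}$ up to $e^{-2nJ_{\min}+o(n)}$, so it is not true that a substantial fraction of them are of comparable order. Second, even granting many terms near the top scale, what is actually needed is the strict exponential gap
\[
\sup_{O\in\mathrm{Per}(n)}\frac{n}{e^{J^n_O}-1}=e^{-nJ_{\min}+o(n)}\;\ll\;A_n^{-1}=e^{\frac n2\mathrm{Pr}(-2J)+o(n)},
\]
that is, the strict inequality $-J_{\min}<\tfrac12\mathrm{Pr}(-2J)$. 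The trivial bound $A_n^{-2}\geq\max_O m^2/(e^{J^n_O}-1)^2$ only gives $\max_O z_O\leq r$, and the soft inequality $\mathrm{Pr}(-2J)\geq-2J_{\min}$ (keep only the minimizing orbit in the sum (\ref{pression})) is not strict. The paper closes precisely this gap with Lemma \ref{ref}, whose proof is genuinely thermodynamic: it uses the variational principle and the uniqueness of equilibrium measures to show that $\beta\mapsto\frac1\beta\mathrm{Pr}(-\beta J)$ is strictly decreasing (Corollary \ref{F}), whence $F(2)=\tfrac12\mathrm{Pr}(-2J)>\lim_{\beta\to\infty}F(\beta)=-J_{\min}$. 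This is not a routine distortion or equidistribution estimate; without it, or an equivalent quantitative count of periodic orbits with near-minimal Birkhoff average of $J$ weighted against the size of the maximal term, your argument does not conclude.
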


\begin{proof}
Let us fix two real numbers $\xi_1$ and $\xi_2$ and  let  $\phi_n$ be the characteristic function of $T_{n,\xi}$:
\begin{multline*}
    \phi_n(\xi_1,\xi_2) :=\ma E\left[\exp\left(iA_n\left(\xi_1\sum\limits_{m|n}m\sum\limits_{O\in\mc
P_m}\frac{\cos(\xi(X_O^n+Y_O^n))}{e^{J^n_O}-1}+\right.\right.\right.\\\left.\left.\left.\xi_2\sum\limits_{m|n}m\sum\limits_{O\in\mc P_m}\frac{\sin(\xi(X^n_O+Y^n_O))}{e^{J^n_O}-1}\right)\right)\right].
\end{multline*}
We compute the limit of $\phi_n(\xi_1,\xi_2)$  as $n$ goes to infinity.
Corollary (\ref{pC}) yields
\begin{equation}
    \label{phi}\left|\phi_n(\xi_1,\xi_2)-\prod\limits_{m|n}\prod\limits_{O\in\mc P_m}\int_0^{2\pi}e^{i\frac{mA_n}{e^{J^n_O}-1}(\xi_1\cos\theta+\xi_2\sin\theta)}\frac{d\theta}{2\pi}\right|\leq C\xi^{c-1}n^{-\frac12}\to 0
\end{equation}
under the assumption (\ref{cond2}).

Let 
\[
\psi(\xi_1,\xi_2) := \int_0^{2\pi} e^{i(\xi_1\cos\theta+\xi_2\sin\theta)}\frac{d\theta}{2\pi}.\]
We have the following Taylor's expansion in 0:
\[\psi(\xi_1,\xi_2) = 1-\frac 1 4 (\xi_1^2+\xi_2^2)+o(\xi_1^2+\xi_2^2).\]
In order to apply this to equation (\ref{phi}), we need to check that  
\begin{lemma}\label{ref}
\[nA_n\sup_{O\in\mathrm{Per}(n)}\frac{1}{e^{J^n_O}-1}\underset{{n\to\infty}}{\longrightarrow}0.\]
\end{lemma}                                                                                                                                                                              
\begin{proof}
See appendix \ref{label}
\end{proof}
We can now state that
\[\begin{split}
    &\prod\limits_{m|n}\prod\limits_{O\in\mc P_m}\int_0^{2\pi}e^{i\frac{mA_n}{e^{J_O}-1}(\xi_1\cos\theta+\xi_2\sin\theta)}\frac{d\theta}{2\pi}=\prod\limits_{m|n}\prod\limits_{O\in\mc P_m}\psi\left(\xi_1\frac{mA_n}{e^{J_O}-1},\xi_2\frac{mA_n}{e^{J_O}-1}\right)\\
    &=\prod\limits_{m|n}\prod\limits_{O\in\mc P_m}\left( 1-\frac {\xi_1^2+\xi_2^2} 4\frac{(mA_n)^2}{(e^{J_O}-1)^2} +o\left(\frac{(mA_n)^2}{(e^{J_O}-1)^2}\right)\right)\\
&=\exp\left(\sum_{m|n}\sum_{O\in\mc P_m}\log\left(1-\frac {\xi_1^2+\xi_2^2} 4\frac{(mA_n)^2}{(e^{J_O}-1)^2} +o\left(\frac{(mA_n)^2}{(e^{J_O}-1)^2}\right)\right)\right)\\
&=\exp\left(\sum_{m|n}\sum_{O\in\mc P_m}-\frac {\xi_1^2+\xi_2^2} 4\frac{(mA_n)^2}{(e^{J_O}-1)^2} +o(\frac{(mA_n)^2}{(e^{J_O}-1)^2})\right)\\
& \underset{(\ref{An})}= e^{-\frac{\xi_1^2+\xi_2^2}4+o(1)},\end{split}\]
We deduce that \[\phi_n(\xi_1,\xi_2)\underset{n\to\infty}{\longrightarrow}e^{-\frac{\xi_1^2+\xi_2^2}4}\]
which is the characteristic function of a Gaussian variable of law $\mc N_{\ma C}(0,1).$
\end{proof}

\section{Discussion}
In this paper we have considered a model where the roof function $\tau$ is random.  However, the numerical experiments suggest a far stronger result: for a fixed function $\tau$ and a semiclassical parameter $\xi$ chosen according to a uniform random distribution in a small window at high frequencies, the result seems to remain true, as shown in the following figures for $\tau(x) =\sin(2\pi x)$.  The moduli also seem to become uniform.   \newline

It would be interesting to understand what informations about the Ruelle resonances can be recovered from the convergence (\ref{theorem}).  We know from the Weyl law from \cite{arnoldi2017asymptotic} established in a similar context that the number of resonances of $\mc L_{\xi,\tau}$ outside the essential spectral radius, for a given $\tau$, are of order $O(\xi)$.  A complete characterization would thus require a knowledge of the traces of $\mc L_{\xi,\tau}^n$ up to times of order $O(\xi)$, while we only have information for $n=O(\log\xi).$

 \begin{figure}[h!]  \includegraphics[scale=0.4]{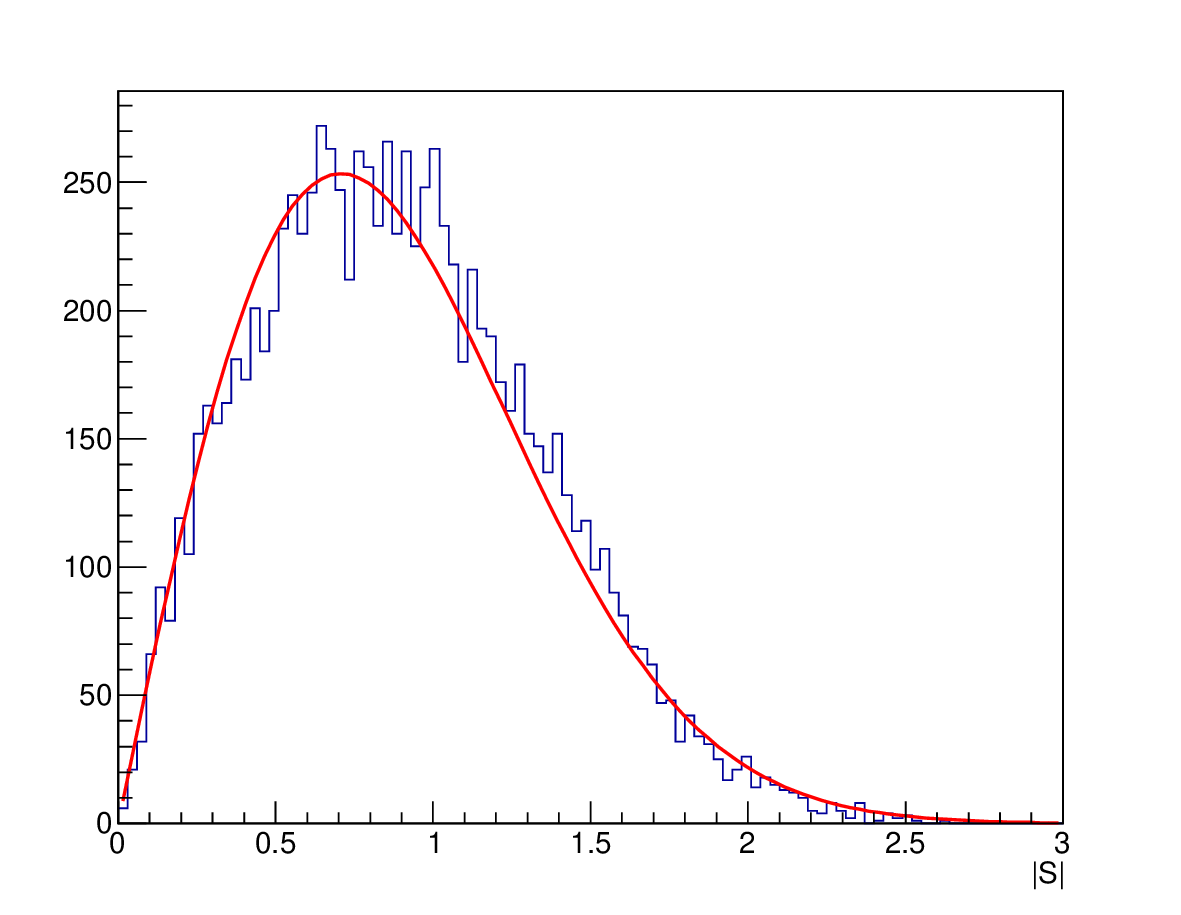}
  \caption{Histogramm of $ S = \left| A_n\mathrm{Tr}^\flat\left(\mc L^n_{\tau_0,\xi}\right) \right| $ for a sample of $10^4$ random values of $\xi$ uniformly distributed in $[\xi_0, \xi_0 +10]$ with $\xi_0= 2.10^6$ and $n=11$ corresponding to a fraction of the Ehrenfest time $C_e:= n\frac{\log2}{\log \xi_0}=0.5$.  It  is well fitted by  the red curve $S\mapsto  C S \exp (-S^2) $.} \label{fig:dessin3} \end{figure}

 \begin{figure}[h!]  \includegraphics[scale=0.4]{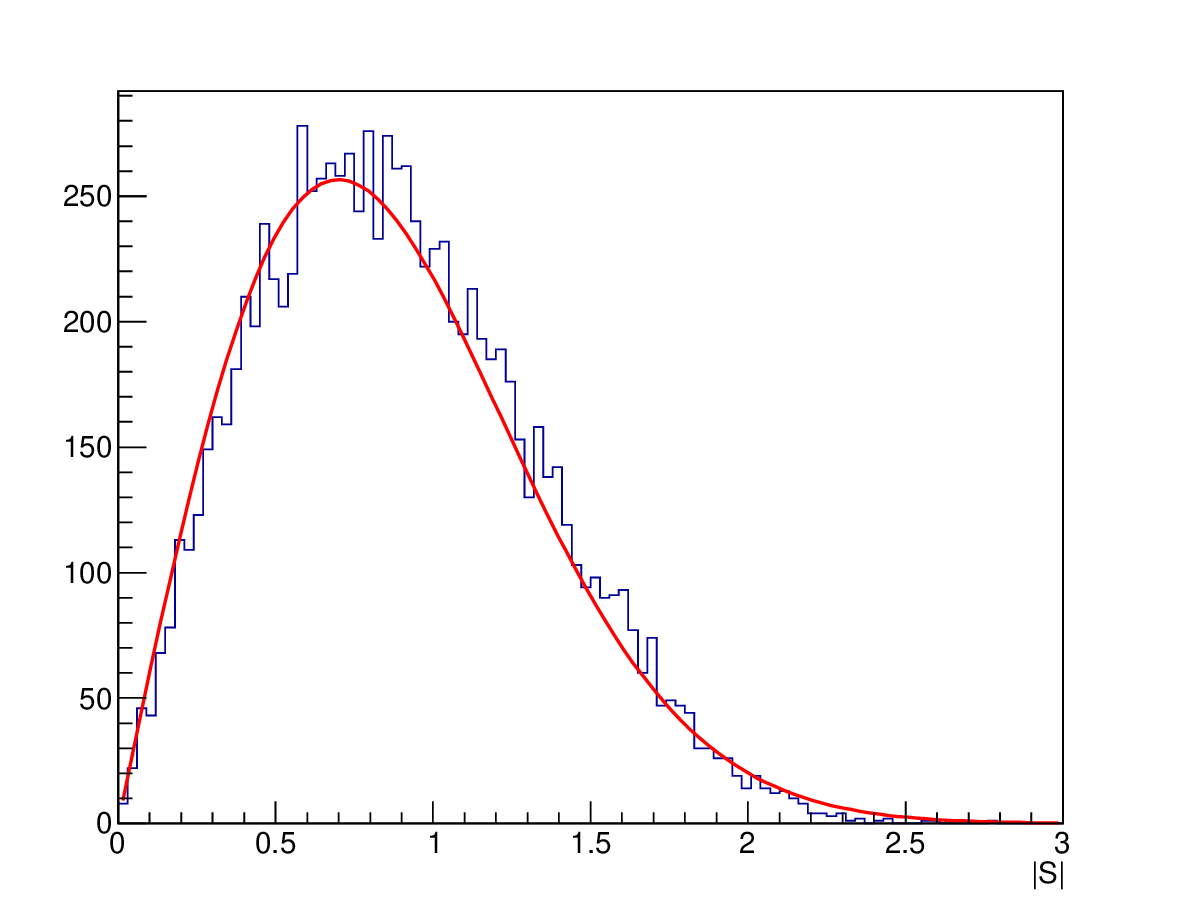}
  \caption{Histogramm of $ S = \left| A_n\mathrm{Tr}^\flat\left(\mc L^n_{\tau_0,\xi}\right) \right| $ for a sample of $10^4$ random values of $\xi$ uniformly distributed in $[\xi_0, \xi_0 +10]$ with $\xi_0= 2000$ and $n=11$ giving  $C_e=1.0$.  The red curve corresponds to $S\mapsto  C S \exp (-S^2) $.} \label{fig:dessin3} \end{figure}
\newpage
\appendix
\section{Proof of lemma \ref{periodic}}\label{annexe1}
 \begin{lemma}\label{periodic}
For every integer $n$, $E^n$ has $l^n-1$ fixed points.
The distance between two distinct periodic points is bounded from below by $\frac 1{M^n-1}$.
 \end{lemma}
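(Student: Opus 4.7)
Proof plan for Lemma \ref{periodic}.

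The plan is to work with a lift of $E$ to the real line and use the basic fact that a strictly increasing continuous function takes each value in its range exactly once. Concretely, since $E:\mathbb T\to\mathbb T$ is an orientation preserving expanding map of degree $l$, pick a lift $\tilde E:\mathbb R\to\mathbb R$ satisfying $\tilde E(x+1)=\tilde E(x)+l$ and $\tilde E'\geq m>1$. The iterate $\tilde E^n$ is then a lift of $E^n$, with $\tilde E^n(x+1)=\tilde E^n(x)+l^n$ and $(\tilde E^n)'\geq m^n>1$ and $(\tilde E^n)'\leq M^n$. Fixed points of $E^n$ on $\mathbb T$ correspond bijectively to solutions $x\in[0,1)$ of $\tilde E^n(x)-x\in\mathbb Z$.

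For the first claim, set $g(x):=\tilde E^n(x)-x$. Then $g'=(\tilde E^n)'-1>0$, so $g$ is strictly increasing, and $g(x+1)-g(x)=l^n-1$. Consequently $g$ restricted to $[0,1)$ is a strictly increasing bijection onto the half-open interval $[g(0),\,g(0)+l^n-1)$. This interval contains exactly $l^n-1$ integers, which gives $l^n-1$ fixed points of $E^n$ on $\mathbb T$.

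For the second claim, let $x\neq y$ be two fixed points of $E^n$ on $\mathbb T$. Choose lifts, still denoted $x,y\in\mathbb R$, so that $|x-y|<1$. By definition $\tilde E^n(x)-x$ and $\tilde E^n(y)-y$ are integers, hence so is their difference
\[
\bigl(\tilde E^n(x)-\tilde E^n(y)\bigr)-(x-y)=\int_y^x\bigl[(\tilde E^n)'(t)-1\bigr]\,dt.
\]
The integrand is strictly positive (since $(\tilde E^n)'\geq m^n>1$) and at most $M^n-1$, so the integral is a nonzero integer bounded in absolute value by $(M^n-1)|x-y|$. Hence $|x-y|\geq 1/(M^n-1)$, which is the announced lower bound for the circle distance.

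I expect no serious obstacle: both statements are essentially one-dimensional calculus on the lift once one observes that $(\tilde E^n)'-1$ is positive and bounded above by $M^n-1$, and that fixed points on $\mathbb T$ correspond to integer values of $g$. The only mild subtlety is making sure the circle distance $d_{\mathbb T}(x,y)$ really is bounded below (and not just some chosen representative difference), which is why one takes the lifts with $|x-y|<1$ so that $|x-y|$ coincides with $d_{\mathbb T}(x,y)$.
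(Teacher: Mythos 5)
Your proof is correct. The argument for the lower bound on the distance between periodic points is essentially identical to the paper's: both pass to a lift, observe that $(\tilde E^n(\tilde x)-\tilde x)-(\tilde E^n(\tilde y)-\tilde y)$ is a nonzero integer because $(\tilde E^n)'-1>0$, and bound the corresponding integral by $(M^n-1)|\tilde x-\tilde y|$. Where you diverge is on the count of fixed points: the paper simply invokes the topological conjugacy of $E$ with the linear model $x\mapsto lx \bmod 1$ (citing Katok--Hasselblatt) and reads off the count $l^n-1$ from the linear case, whereas you give a direct, self-contained counting argument via the strictly increasing function $g=\tilde E^n-\mathrm{id}$, whose restriction to $[0,1)$ is a bijection onto a half-open interval of length $l^n-1$ and therefore hits exactly $l^n-1$ integers. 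Your version has the advantage of not needing the conjugacy result and of exhibiting the fixed points explicitly (one per integer value of $g$), at the cost of a couple of extra lines; the paper's version is shorter but outsources the combinatorics to a standard reference. Both are complete, and your handling of the one genuine subtlety in the second part --- choosing lifts with $|x-y|<1$ so that the bound applies to the circle distance rather than to an arbitrary pair of representatives (the paper instead takes an infimum over representatives at the end) --- is sound.
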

 \begin{proof}
$E$ is topologically conjugated to the linear expanding map of same degree $x\mapsto lx\mod 1$, (see \cite{katok1997introduction}, p.73).
Thus $E^n$ has $l^n-1$ fixed points.
Let $\tilde E:\ma R\longrightarrow \ma R$ be a lift of $E$, $x\neq y$ be two fixed points of $E^n$ and $\tilde x,\tilde y\in\ma R$ be representatives of $x$ and $y$ respectively.  Note that 
\[d(x,y)=\inf|\tilde x-\tilde y|\]
where the infimum is taken over all couples of representatives $(\tilde x,\tilde y)$.
Since $E^n(x) = x$ and $E^n(y) = y$, $\tilde E^n(\tilde y)-\tilde E^n(\tilde x)-(\tilde y-\tilde x)$ is an integer, different from $0$ because $\tilde E^n $ is expanding.
Thus,
\[\left|\tilde E^n(\tilde y)-\tilde E^n(\tilde x)-(\tilde y-\tilde x)\right|\geq 1,\]
that is
\[\left|\int_{\tilde x}^{\tilde y}\left((\tilde E^n)'(t)-1\right)\mathrm{d}t\right|\geq 1\]
Finally,
\[|\tilde y-\tilde x|(M^n-1)\geq 1.\]
Taking the infimum gives the result.
\end{proof}
\section{Proof of lemma \ref{regularite} on the link between regularity of a Gaussian field and variance of the Fourier coefficients}\label{Borel-Cantelli}
Let us recall the following classical estimate:
\begin{lemma}\label{lemme Alejandro}
If $(X_p)_{p\in\ma Z}$ is a family of independent centered Gaussian random variables of variance $1$, then, almost surely,
\[\forall \delta>0,X_p=o(p^\delta).\]

\end{lemma}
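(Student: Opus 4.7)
The plan is to combine the standard Gaussian tail estimate with the Borel--Cantelli lemma, then upgrade the pointwise-in-$\delta$ conclusion to a simultaneous statement by taking a countable intersection.

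First I would fix $\delta>0$ and use the elementary Gaussian tail bound
\[
\ma P\left[|X_p|>|p|^\delta\right]\leq \frac{2}{|p|^\delta\sqrt{2\pi}}\exp\!\left(-\frac{|p|^{2\delta}}{2}\right)
\]
for $p\in\ma Z^*$. This bound is summable over $p\in\ma Z$, so the Borel--Cantelli lemma gives that almost surely only finitely many of the events $\{|X_p|>|p|^\delta\}$ occur. Hence there is a random index $p_0(\omega)$ such that $|X_p(\omega)|\leq |p|^\delta$ for every $|p|\geq p_0(\omega)$, i.e.\ $X_p=O(|p|^\delta)$ almost surely. Replacing $\delta$ with $\delta/2$ in the same argument immediately yields $X_p=o(|p|^\delta)$ almost surely, since $|p|^{\delta/2}=o(|p|^\delta)$.

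To pass from ``for each fixed $\delta>0$, almost surely $X_p=o(|p|^\delta)$'' to ``almost surely, for every $\delta>0$, $X_p=o(|p|^\delta)$'', I would apply the previous step along the countable sequence $\delta_k:=1/k$, $k\in\ma N^*$. Each of the events
\[
\Omega_k:=\left\{\omega:\ X_p(\omega)=o(|p|^{1/k})\right\}
\]
has probability $1$, so $\Omega_\infty:=\bigcap_{k\geq 1}\Omega_k$ also has probability $1$. On $\Omega_\infty$, for any $\delta>0$, choosing $k$ with $1/k<\delta$ gives $X_p=o(|p|^{1/k})=o(|p|^\delta)$, which is the desired conclusion.

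The argument is essentially routine; the only point worth highlighting is the swap of quantifiers between $\delta$ and $\omega$, which is exactly what the countable intersection handles. No step poses a real obstacle: the tail bound is elementary, summability over $\ma Z$ is trivial, and Borel--Cantelli and countable intersection of full-measure sets do the rest.
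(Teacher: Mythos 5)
Your proof is correct and follows essentially the same route as the paper: the Gaussian tail bound, Borel--Cantelli applied for each exponent, and a countable intersection over $\delta_k=1/k$ to handle all $\delta>0$ simultaneously (the paper uses the same sequence of exponents $1/n$). Your explicit remark on upgrading $O(|p|^{\delta/2})$ to $o(|p|^\delta)$ is a small clarification the paper leaves implicit, but the argument is the same.
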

\begin{proof}
Let $\delta>0$.  Let us use Borel-Cantelli lemma:
\[\forall p\in\ma Z, \ma P(\abs{X_p}>p^{\delta})= \int_{\abs x>p^{\delta}}e^{-\frac{x^2}2}\frac {dx}{\sqrt{2\pi}}.\]
Now, we have the upper bound
\[p^{\delta}\int_{p^{\delta}}^{+\infty}e^{-\frac{x^2}2}\frac {dx}{\sqrt{2\pi}}\leq\int_{p^{\delta}}^{+\infty}xe^{-\frac{x^2}2}\frac {dx}{\sqrt{2\pi}} =\frac{e^{-\frac {p^{2\delta}}2}}{\sqrt{2\pi}}.\]
Thus,
\[\forall p\in\ma Z^*, \ma P(\abs{X_p}>p^{\delta})\leq\frac 2{\sqrt{2\pi}p^{\delta}} e^{-\frac {p^{2\delta}}2}.\]
Consequently,
\[\sum_p\ma P(\abs{X_p}>p^{\delta})<\infty\] and by Borel-Cantelli, almost surely, \[\#\{p\in\ma Z,\abs{X_p}>p^{\delta}\}<\infty.\]
\end{proof}
With this in mind, we can see that if a real random function $\tau$ has random Fourier coefficients $(c_p)_{p\in\ma Z}$, pairwise independent (for non-negative values of $p$), with variance
\[\sigma_p^2:=\ma E\left[\abs{c_p}^2\right]= O(\frac 1{p^{2k+2+\eta}}),\]
for some $\eta>0$,
then by the previous lemma, almost surely, for all $\delta>0$,
\[\frac{ c_p}{\sigma_p}= o(p^\delta),\] and thus for $\delta = \frac\eta 2,$
\[ c_p = o\left(\frac 1 {p^{k+1+\frac\eta 2}}\right)\ \text{a.s}.\]
As a consequence,\[\sum_p c_p (2i\pi p)^k e^{2i\pi px}\]converges normally and thus $\tau$ is almost surely $\mc C^k$.

\section{Ruelle resonances and Flat trace}\label{appendice Ruelle spectrum}

\subsection{Ruelle spectrum}If $\tau\in\mc C^k(\ma T)$, the operator $\mc L_{\xi,\tau}$ can be extended to distributions $(\mc C^k(\ma T))'$ by duality.  We will denote $H^{s}(\ma T)$ the Sobolev space of order $s \in \ma R$.
\begin{theorem}[\cite{ruelle1986locating},\cite{baladi2018dynamical} Thm 2.15 and Lemma 2.16]
Let $k\geq1$.  If $\tau$ belongs to $\mc C^k$, then for every $0\leq s<k,$ $\mc L_{\xi,\tau}:H^{-s}(\ma T)\rightarrow H^{-s}(\ma T)$ is bounded and its essential spectral radius $r_{\mathrm{ess}}$ satisfies
\[r_{\mathrm{ess}}\leq \frac{e^{\mathrm{Pr}(-\frac 12J)}}{m^s},\]
where $m=\inf E'$, $J(x)=\log E'(x)$ and $\mathrm{Pr}(-\frac 12J)$ is defined in \ref{pressure}.
\end{theorem}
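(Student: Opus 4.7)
The plan is to prove boundedness and the essential-spectral-radius bound through the classical Lasota--Yorke / Hennion quasi-compactness strategy, passing to the Banach adjoint of $\mc L_{\xi,\tau}$ acting on $H^s=(H^{-s})^*$ so that the analysis is carried out for a Perron--Frobenius-type operator on a positive-order Sobolev space.

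For boundedness on $H^{-s}$ with $s<k$, I would factor $\mc L_{\xi,\tau}$ as the pull-back by the $C^k$ expanding map $E$ followed by multiplication by $e^{i\xi\tau}\in C^k$. Multiplication by a $C^k$ function is bounded on $H^\sigma$ for $|\sigma|\leq k$ by the Kato--Ponce / paraproduct estimate, hence on $H^{-s}$ for $s\leq k$ by duality. The composition operator $v\mapsto v\circ E$ is bounded on $H^{-s}$ for $s<k$: partition $\ma T$ into the inverse branches of $E$ and use that each such branch is a $C^k$ diffeomorphism with derivative bounded by $1/m$, giving boundedness of composition on $H^\sigma$ for $|\sigma|\leq k$ and transferring to $H^{-s}$ by duality. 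Composing the two estimates yields the claim.

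For the essential spectral radius, let
\[\mc L^*_{\xi,\tau}\phi(y)=\sum_{x:E(x)=y}\frac{e^{-i\xi\tau(x)}\phi(x)}{E'(x)}\]
denote the Banach adjoint. Since the essential spectral radii of an operator and of its Banach adjoint coincide, it suffices to bound $r_{\mathrm{ess}}(\mc L^*_{\xi,\tau})$ on $H^s$. I would establish, for every $n\geq 1$, a Lasota--Yorke inequality of the form
\[\|(\mc L^*_{\xi,\tau})^n\phi\|_{H^s}\leq C_1\left(\frac{e^{\mathrm{Pr}(-J/2)}}{m^s}\right)^n\|\phi\|_{H^s}+C_n\|\phi\|_{H^{s-1}},\]
in which the leading constant is produced by two independent mechanisms. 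First, the sum over the $n$-step preimages weighted by $(E^n)'(x)^{-1}$, controlled through a trace-type Cauchy--Schwarz pairing of the operator with itself on $L^2$ that isolates the thermodynamic sum $\sum_{E^n(x)=y}(E^n)'(x)^{-1/2}$ of exponential rate $e^{\mathrm{Pr}(-J/2)}$. Second, each of the $s$ derivatives falling on an iterate costs a factor $(E^n)'(x)^{-1}\leq m^{-n}$ by the chain rule, contributing $m^{-ns}$. The remainder $C_n\|\phi\|_{H^{s-1}}$ collects the lower-order commutator contributions, finite because $\tau,E\in C^k$ with $s<k$. Once this estimate is in hand, the compactness of the embedding $H^s\hookrightarrow H^{s-1}$ combined with Hennion's theorem yields
\[r_{\mathrm{ess}}(\mc L^*_{\xi,\tau};H^s)\leq\frac{e^{\mathrm{Pr}(-J/2)}}{m^s},\]
and duality transfers this bound back to $\mc L_{\xi,\tau}$ on $H^{-s}$.

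The main obstacle lies in extracting the \emph{sharp} rate $e^{\mathrm{Pr}(-J/2)}$ rather than the coarser $e^{\mathrm{Pr}(-J)/2}$ that a naive single Cauchy--Schwarz on $L^2$ would produce: the correct exponent appears only after symmetrizing the preimage sum so that $(E^n)'(x)^{-1/2}$ emerges and can be matched with the topological pressure of $-J/2$, which is the core thermodynamic input of the argument. A secondary subtlety is handling non-integer Sobolev orders $s$, which requires a Littlewood--Paley decomposition to honestly account for the fractional-derivative contraction $m^{-s}$; at this stage the strict inequality $s<k$ is exactly what keeps the commutator remainder of order $s-1$ and compatible with the available $C^k$ regularity of $\tau$ and $E$.
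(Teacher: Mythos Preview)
The paper does not prove this statement: it is quoted as background from \cite{ruelle1986locating} and \cite{baladi2018dynamical} (Thm 2.15 and Lemma 2.16) in the appendix on Ruelle spectrum, with no argument supplied. So there is no ``paper's own proof'' to compare your proposal against.

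That said, your outline is precisely the strategy of the cited references: pass to the $L^2$-adjoint (a Perron--Frobenius operator) acting on positive-order Sobolev spaces, establish a Lasota--Yorke inequality with a contracting leading term and a compact remainder in $H^{s-1}$, and conclude via Hennion's theorem. The two ingredients you isolate---the thermodynamic weight coming from the branched preimage sum, and the factor $m^{-s}$ coming from the chain rule when $s$ derivatives hit the inverse branches---are exactly what drives the argument in Baladi's book.

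One point to sharpen: your heuristic for why the pressure of $-\tfrac12 J$ (rather than, say, $-J$ or $\tfrac12\mathrm{Pr}(-J)$) appears is not quite right. The exponent $\tfrac12$ does not come from a ``symmetrization'' that makes $(E^n)'(x)^{-1/2}$ emerge; it comes from working in $L^2$. When you estimate the $L^2$ norm of the leading term $\sum_{E^n(x)=y}(E^n)'(x)^{-1-s}\partial^s\phi(x)$ via Cauchy--Schwarz with the natural splitting, the change of variables absorbs one full power of $(E^n)'$ and the remaining preimage sum is $\sum_{E^n(x)=y}(E^n)'(x)^{-1-2s}$, whose exponential rate is $\mathrm{Pr}(-(1+2s)J)$; it is after taking the square root and separating the $m^{-s}$ contribution that one lands on a bound of the stated shape. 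In fact the bound $e^{\mathrm{Pr}(-J/2)}/m^s$ as stated in the paper is not the sharpest one obtainable by this route (Baladi's Thm~2.15 gives a finer expression), so you should not expect a clean mechanism singling out $\mathrm{Pr}(-J/2)$ specifically---it is one convenient closed form that dominates the true essential spectral radius.
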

The  discrete set of eigenvalues of finite multiplicities outside a given disk of radius $r\geq \frac{e^{\mathrm{Pr}(-\frac 12J)}}{m^s}$, and the associated eigenspaces remain the same in every space $H^{-s'}(\ma T)$ for $s'\geq s$.  This can be deduced for example from the fact that these spectral elements give the asymptotic behaviour of the correlation functions: for any smooth functions $f,g$ on $\ma T$, for any $s$ large enough, if $\mc L_{\xi,\tau}:H^{-s}(\ma T)\rightarrow H^{-s}(\ma T)$ has no eigenvalue of modulus $r$,  
\begin{equation}
    \int \mc L^n_{\xi,\tau}f\cdot g = \sum_{\substack{\lambda\in\sigma(\mc L_{\xi,\tau})\\|\lambda|>r}}\int \mc L^n_{\xi,\tau}(\Pi_\lambda f)\cdot g +O_{n\to\infty}(r^n),
    \end{equation}
    where $\Pi_\lambda$ is the spectral projector associated to $\lambda.$
 We are interested in the statistical properties of these eigenvalues, called Ruelle-Pollicott spectrum or Ruelle resonances, when $\tau$ is a random function.  One way to get informations about the spectrum of such operators is using a trace formula.
Although $\mc L_{\xi,\tau}$ is not trace-class, we can give a certain sense to the trace of $\mc L_{\xi,\tau}$.
\subsection{Flat trace}
This section is an adaptation of section 3.2.2 in \cite{baladi2018dynamical}
In order to motivate the definition of flat trace, let us first recall the following fact:
\begin{lemma}Let $m> \frac12$. (Then the Dirac distributions belong to $H^{-m}(\ma T)$).
If $T:H^{-m}(\ma T)\longrightarrow H^m(\ma T)$ is a bounded operator,  then it has a continuous Schwartz kernel $K$ and 
\[K(x,y)=\langle\delta_x,T\delta_y\rangle.\]
If moreover $T$ is class-trace, then 
\[\mathrm{Tr}\ T=\int_{\ma T}K(x,x)\mathrm{d}x.\]
\end{lemma}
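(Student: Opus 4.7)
The plan is to prove the continuous kernel formula first, then the trace identity. For the former, I begin by checking $\delta_x \in H^{-m}(\ma T)$ when $m > 1/2$: the Fourier coefficients of $\delta_x$ are $e^{-2i\pi px}$, all of modulus $1$, so $\|\delta_x\|_{H^{-m}}^2 = \sum_p (1+p^2)^{-m}$, which converges iff $2m > 1$. Setting $K(x,y) := \langle \delta_x, T\delta_y\rangle$ is then well-posed because $T\delta_y \in H^m(\ma T)$ and $H^{-m}(\ma T)$ is its dual. For joint continuity of $K$ on $\ma T^2$, I would first show that $y \mapsto \delta_y$ is continuous into $H^{-m}(\ma T)$ by a dominated convergence argument on Fourier tails: $\|\delta_y - \delta_{y_0}\|_{H^{-m}}^2 = \sum_p (1+p^2)^{-m}|e^{-2i\pi py} - e^{-2i\pi py_0}|^2$ tends to $0$ termwise, with summable dominant $4(1+p^2)^{-m}$. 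Boundedness of $T$ then yields continuity of $y \mapsto T\delta_y$ into $H^m(\ma T)$, and continuity of the duality pairing produces joint continuity of $K$. To identify $K$ as the Schwartz kernel of $T$, I would check $\langle Tf, g\rangle = \iint K(x,y) f(y)\overline{g(x)}\,dx\,dy$ for smooth $f,g$ by approximating $f$ in $H^{-m}$ by Riemann sums of the formal identity $f = \int f(y)\delta_y\,dy$, then using continuity of $T$ and of the pairing to pass to the limit.

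For the trace formula, the idea is to use the Fourier orthonormal basis $(e_p)_{p\in\ma Z}$ of $L^2(\ma T)$. Assuming $T$ is trace class in a suitable sense (most naturally, its composition with the compact embedding $H^m \hookrightarrow H^{-m}$ is trace class on $H^{-m}$, or equivalently its $L^2 \to L^2$ restriction is trace class), one has $\mathrm{Tr}(T) = \sum_p \langle Te_p, e_p\rangle_{L^2}$. Each matrix element expands via the kernel formula as $\iint K(x,y) e^{2i\pi p(y-x)}\,dx\,dy$, and summing over $p$ formally produces a Dirac comb on the diagonal, collapsing to $\int_\ma T K(x,x)\,dx$. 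The main obstacle is to justify this exchange of summation and integration rigorously; a clean implementation proceeds through the singular value decomposition: trace-class implies $T = \sum_n s_n \langle \cdot, f_n\rangle e_n$ with $\sum_n s_n < \infty$ and orthonormal families $(e_n),(f_n)$, whence $K(x,y) = \sum_n s_n e_n(x)\overline{f_n(y)}$ converges uniformly on $\ma T^2$ by a Mercer-type argument based on continuity of $K$; then $\int_\ma T K(x,x)\,dx = \sum_n s_n \langle e_n, f_n\rangle_{L^2} = \mathrm{Tr}(T)$.
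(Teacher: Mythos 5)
The paper states this lemma without proof (it is recalled from \cite{baladi2018dynamical}), so there is no internal argument to compare against; I review your proposal on its own merits. The first half is correct and essentially complete: $\|\delta_x\|_{H^{-m}}^2=\sum_p(1+p^2)^{-m}<\infty$ for $m>\frac12$, continuity of $y\mapsto\delta_y$ into $H^{-m}(\ma T)$ by dominated convergence on Fourier tails, joint continuity of $K(x,y)=\langle\delta_x,T\delta_y\rangle=(T\delta_y)(x)$ from boundedness of $T$ and of the pairing, and the identification with the Schwartz kernel via Riemann sums of $f=\int f(y)\delta_y\,dy$ in $H^{-m}$ for smooth $f$. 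You could add that the trace-class hypothesis is in fact automatic here: the inclusions $L^2\hookrightarrow H^{-m}$ and $H^m\hookrightarrow L^2$ are Hilbert--Schmidt for $m>\frac12$, so $T|_{L^2}$ is a product of two Hilbert--Schmidt operators with a bounded one.

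The trace identity is where there is a genuine gap. Your ``clean implementation'' rests on the claim that $K(x,y)=\sum_n s_n e_n(x)\overline{f_n(y)}$ converges uniformly ``by a Mercer-type argument based on continuity of $K$''. Mercer's theorem uses positivity of the operator in an essential way; for a general trace-class operator, continuity of the kernel does not yield pointwise (let alone uniform) convergence of the singular value expansion. The bounds actually available here are $\|s_ne_n\|_{H^m}\le C$ and $\|s_nf_n\|_{H^m}\le C$ (since $e_n=s_n^{-1}Tf_n$, $f_n=s_n^{-1}T^*e_n$), giving only $|s_ne_n(x)\overline{f_n(y)}|\le C^2s_n^{-1}$, which is not summable; and the $L^2(\ma T^2)$-limit of the partial sums determines $K$ only up to a null set, which says nothing about the diagonal. (The statement ``trace class with continuous kernel implies $\mathrm{Tr}\,T=\int K(x,x)\,dx$'' is true, but it is itself a nontrivial theorem, not a corollary of Mercer.) The fix is to carry out your first idea rigorously using the $H^{-m}\to H^m$ boundedness instead of the SVD: with $e_q(x)=e^{2i\pi qx}$ one has $\delta_y=\sum_q\overline{e_q(y)}\,e_q$ with $\bigl\|\delta_y-\sum_{|q|\le N}\overline{e_q(y)}\,e_q\bigr\|_{H^{-m}}^2=\sum_{|q|>N}(1+q^2)^{-m}\to0$ uniformly in $y$, hence $K(x,y)=\sum_q\overline{e_q(y)}\,(Te_q)(x)$ converges uniformly on $\ma T^2$ by the embedding $H^m\hookrightarrow C^0$. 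Restricting to the diagonal and integrating term by term gives $\int_{\ma T}K(x,x)\,dx=\sum_q\langle Te_q,e_q\rangle_{L^2}$, the series being absolutely convergent since $|\langle Te_q,e_q\rangle_{L^2}|\le\|T\|(1+q^2)^{-m}$; and $\sum_q\langle Te_q,e_q\rangle_{L^2}=\mathrm{Tr}\,T$ in any orthonormal basis once $T|_{L^2}$ is trace class. No Dirac comb and no singular value decomposition are needed.
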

Let $\rho$ be a smooth compactly supported function such that $\int_{\ma R}\rho=1$.  For $\epsilon>0$ and $y\in\ma T$ we write
\[\rho_{\epsilon,y}(t)=\frac1\epsilon\rho\left(\frac{t-y}{\epsilon}\right).\]Periodizing this function gives rise to a smooth function $\rho_{\epsilon,y}$ on $\ma T$ satisfying
\[\rho_{\epsilon,y}\underset{\epsilon\to0}\longrightarrow \delta_y\] as distributions. 
\begin{defi}
Let $m\geq 0$ and $T:H^{-m}(\ma T)\longrightarrow H^{-m}(\ma T)$ be a bounded operator extending to a continuous operator $(\mc C^0(\ma T))'\longrightarrow(\mc C^0(\ma T))'$.  Then the formula \[K_\epsilon(x,y):=\langle\rho_{\epsilon,x},T\delta_y\rangle\]
defines for every $\epsilon>0$ a continuous function on $\ma T^2.$ Let
\[\mathrm{Tr}^\flat_\epsilon(T):=\int_{\ma T}K_\epsilon(x,x)\mathrm{d}x.\]
We say that $T$ admits a flat trace $\mathrm{Tr}^\flat(T)$ if $\mathrm{Tr}^\flat_\epsilon(T)\rightarrow\mathrm{Tr}^\flat(T)$ as $\epsilon$ goes to zero, independently of the choice of the mollifying function $\rho$.
\end{defi}
Note that, for any $n\in\ma N^*,$ $\xi\in\ma R$, $\tau\in\mc C^0(\ma T)$, the transfer operator $\mc L^n_{\xi,\tau}:(\mc C^0(\ma T))'\longrightarrow(\mc C^0(\ma T))'$ is bounded. 
\begin{lemma}[Trace formula, \cite{atiyah1967lefschetz}, \cite{guillemin1977lectures}]
Let $\tau\in\mc C^k(\ma T), k\geq0.$ For any integer $n\geq 1$,  $\mc L_{\xi,\tau}^n$ has a flat trace
\begin{equation}\mathrm{Tr}^\flat\mc L^n_{\xi,\tau}= \sum_{x,E^n(x)=x}\frac{e^{i\xi\tau_x^n}}{{(E^n)'(x)}-1}
\end{equation}
\end{lemma}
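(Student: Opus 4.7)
The plan is to compute the Schwartz kernel of $\mc L^n_{\xi,\tau}$, evaluate the regularized diagonal integral defining $\mathrm{Tr}^\flat_\epsilon$, and take $\epsilon\to 0$, observing that the regularization localizes onto the fixed points of $E^n$ with a weight dictated by the transversality of the graph of $E^n$ to the diagonal. First, iterating the formula for $\mc L_{\xi,\tau}$ gives $\mc L^n_{\xi,\tau}u(x) = e^{i\xi\tau^n_x}u(E^n(x))$, so that the Schwartz kernel of $\mc L^n_{\xi,\tau}$ is the distribution $K_n(x,y) = e^{i\xi\tau^n_x}\delta(y - E^n(x))$. Equivalently, pulling back $\delta_y$ along the $l^n$ inverse branches $\phi_1,\dots,\phi_{l^n}$ of $E^n$ (these are well-defined since $E$ is a degree-$l$ local diffeomorphism with $E'\geq m>1$),
\[\mc L^n_{\xi,\tau}\delta_y = \sum_{i=1}^{l^n}\frac{e^{i\xi\tau^n_{\phi_i(y)}}}{(E^n)'(\phi_i(y))}\,\delta_{\phi_i(y)}.\]

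Plugging this into $K_\epsilon(x,y) = \langle\rho_{\epsilon,x},\mc L^n_{\xi,\tau}\delta_y\rangle$, integrating on the diagonal $y=x$, and then changing variables $z = \phi_i(x)$ (with $dx = (E^n)'(z)\,dz$) branch by branch to recombine the $l^n$ pieces into a single integral over $\ma T$ yields
\[\mathrm{Tr}^\flat_\epsilon(\mc L^n_{\xi,\tau}) = \int_{\ma T} e^{i\xi\tau^n_z}\,\frac{1}{\epsilon}\rho\!\left(\frac{z - E^n(z)}{\epsilon}\right)dz,\]
where for $\epsilon$ sufficiently small the periodization of $\rho_{\epsilon,x}$ only contributes the ``untwisted'' term thanks to Lemma \ref{periodic}: the zeros of $g(z):=z - E^n(z)$ on $\ma T$ (i.e.\ the fixed points of $E^n$) are separated by at least $(M^n-1)^{-1}$, which is also the scale on which $g$ itself is monotone between consecutive zeros. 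Now the map $g$ has simple zeros at each fixed point $x$ of $E^n$, with $g'(x) = 1-(E^n)'(x)\leq 1-m^n<0$. Hence $\frac{1}{\epsilon}\rho(g(\cdot)/\epsilon)$ converges weakly to $\delta\circ g = \sum_{E^n(x)=x}\frac{1}{|g'(x)|}\delta_x$, and testing against the $C^1$ function $z\mapsto e^{i\xi\tau^n_z}$ gives
\[\mathrm{Tr}^\flat_\epsilon(\mc L^n_{\xi,\tau}) \xrightarrow[\epsilon\to 0]{} \sum_{x:E^n(x)=x}\frac{e^{i\xi\tau^n_x}}{(E^n)'(x)-1},\]
which is the desired formula; independence of the limit from the choice of $\rho$ is automatic since $\int\rho = 1$.

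The main delicate point is justifying that $\mathrm{Tr}^\flat_\epsilon(\mc L^n_{\xi,\tau})$ is well-defined and that the limit $\epsilon\to 0$ may be interchanged with the sum over branches. For the former one verifies that $\mc L^n_{\xi,\tau}:H^{-m}(\ma T)\to H^{-m}(\ma T)$ is bounded for some $m>\tfrac12$, which is exactly what $\tau\in C^1$ (hence $k\geq 1$) guarantees: composition by the locally $C^1$-diffeomorphism $E$ acts continuously on $H^{-1}$, and multiplication by $e^{i\xi\tau}\in C^1$ preserves $H^{-1}$. For the latter, the $(M^n-1)^{-1}$-separation of fixed points from Lemma \ref{periodic} ensures that once $\epsilon$ is small the contributions from different fixed points decouple, reducing the analysis to a one-variable change of variables near each transverse zero of $g$, which is routine. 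No stationary-phase argument in $\xi$ is needed here since $\xi$ enters only through a smooth multiplier and the phase function involves no oscillation in the regularization parameter.
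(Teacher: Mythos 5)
Your proposal is correct and follows essentially the same route as the paper: both compute $\mathrm{Tr}^\flat_\epsilon(\mc L^n_{\xi,\tau})$ by expressing the action on Dirac masses (you via inverse branches, the paper via the $L^2$-adjoint, which is the same computation), reduce to $\int_{\ma T}\rho_{\epsilon,0}(z-E^n(z))e^{i\xi\tau^n_z}\,dz$ by a change of variables, and then localize at the transverse zeros of $z\mapsto z-E^n(z)$ — your identity $\delta\circ g=\sum_{g(x)=0}|g'(x)|^{-1}\delta_x$ being exactly the paper's second application of its change-of-variables formula. The additional remarks on periodization, the separation of fixed points from Lemma \ref{periodic}, and boundedness on $H^{-1}$ are consistent with the paper's framework.
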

\begin{proof}
\[\mathrm{Tr}^\flat_\epsilon(\mc L^n_{\xi,\tau})=\int_{\ma T}\langle\rho_{\epsilon,x},\mc L^n_{\xi,\tau} \delta_x\rangle\mathrm dx.\]
By definition of the action of $\mc L^n_{\xi,\tau}$ on distributions,
\[\langle\rho_{\epsilon,x},\mc L^n_{\xi,\tau} \delta_x\rangle=(\mc L^n_{\xi,\tau})^*\rho_{\epsilon,x}(x),\]
where $(\mc L^n_{\xi,\tau})^*$ is the $L^2$-adjoint of $\mc L^n_{\xi,\tau}.$
Let us recall that, if $\phi:\ma T\longrightarrow\ma T$ is a local diffeomorphism, for every continuous functions $u,v$ on $\ma T$,
\begin{equation}
    \label{changement de variables}
    \int u(\phi(y))v(y)\mathrm{d}y=\int u(x)\sum_{\phi(y)=x}\frac{v(y)}{|\phi'(y)|}\mathrm{d}x.
\end{equation}
Thus, \[(\mc L^n_{\xi,\tau})^*v(x)=\sum_{E^n(y)=x}\frac{v(y)e^{i\xi\tau^n_y}}{(E^n)'(y)}.\]
Therefore
\[\begin{split}
    \mathrm{Tr}^\flat_\epsilon(\mc L^n_{\xi,\tau})&=\int_{\ma T}(\mc L^n_{\xi,\tau})^*\rho_{\epsilon,x}(x)\mathrm{d}x\\
    &=\int_{\ma T}\sum_{E^n(y)=x}\frac{\rho_{\epsilon,0}(y-E^n(y))e^{i\xi\tau^n_y}}{(E^n)'(y)}\mathrm{d}x\\
    &=\int_{\ma T}\rho_{\epsilon,0}(y-E^n(y))e^{i\xi\tau^n_y}\mathrm{d}y
\end{split}\]
by the change of variables $x=E^n(y)$.
Now, since $E$ is expansive, $y\mapsto y-E^n(y)$ is a local diffeomorphism, so applying (\ref{changement de variables}) once again gives
\[\begin{split}
    \mathrm{Tr}^\flat_\epsilon(\mc L^n_{\xi,\tau})&=\int_{\ma T}\rho_{\epsilon,0}(z)\sum_{y-E^n(y)=z}\frac{e^{i\xi\tau_y^n}}{{(E^n)'(y)}-1} \mathrm{d}z\\
    &\underset{\epsilon\to0}\longrightarrow\sum_{E^n(y)=y}\frac{e^{i\xi\tau_y^n}}{{(E^n)'(y)}-1}.
\end{split}\]
\end{proof}
If $E$ and $\tau$ are analytic, it is well known that $\mc L$ is trace-class and that $\mathrm{Tr}^\flat(\mc L_{\xi,\tau})=\mathrm{Tr}(\mc L_{\xi,\tau})$ (see for instance \cite{jezequel2017local}).  In the smooth setting however the decay of the Ruelle-Pollicott spectrum can be arbitrarily slow (\cite{jezequel2017local}, Proposition 1.10).  The flat trace is however related to the Ruelle-Pollicott spectrum defined above in the following way (This is a consequence of Thm 3.5 in \cite{baladi2018dynamical} and Thm 2.4 in \cite{jezequel2017local}):

\begin{proposition}\label{baladi}
Assume that $\tau\in\mc C^k(\ma T)$ for some $k\geq 1$.
 Let $\xi\in\ma R$, $0\leq s<k$, and $r>\frac{e^{\mathrm{Pr}(-\frac12J)}}{m^s}$ be such that $\mc L_{\xi,\tau}:H^{-s}(\ma T)\longrightarrow H^{-s}(\ma T)$ has no eigenvalue of modulus $r$, then
\begin{equation}
\label{baladi_eq}
   \exists C>0,\forall n\in \ma N,\left|\mathrm{Tr}^\flat\mc L^n_{\xi,\tau} - \sum\limits_{\substack{\lambda\in\sigma(\mc L_{\xi,\tau})\\\abs{\lambda}>r}}\lambda^n \right|\leq Cr^n,
\end{equation}
where the eigenvalues are counted with multiplicity.

\end{proposition}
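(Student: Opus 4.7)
The plan is to exploit the quasi-compactness of $\mc L:=\mc L_{\xi,\tau}$ on $H^{-s}(\ma T)$. Since $r$ is strictly larger than the essential spectral radius and the circle $\{|z|=r\}$ is free of spectrum, only finitely many Ruelle resonances $\lambda$ satisfy $|\lambda|>r$, and each generalized eigenspace $V_\lambda$ is finite dimensional. I would let $\Pi$ be the spectral projector onto $\bigoplus_{|\lambda|>r} V_\lambda$, defined via a Dunford contour integral of the resolvent along a curve separating the two pieces of the spectrum, and decompose $\mc L = \mc L_1 + \mc L_2$ with $\mc L_1:=\mc L\Pi$ and $\mc L_2:=\mc L(I-\Pi)$. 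Since $\Pi$ commutes with $\mc L$ and is idempotent, one has $\mc L_1\mc L_2=\mc L_2\mc L_1=0$, hence $\mc L^n=\mc L_1^n+\mc L_2^n$ for every $n\geq 1$; the spectrum of $\mc L_2$ on $H^{-s}$ reduces to $\sigma(\mc L)\cap\{|z|\leq r\}$, so in particular its spectral radius is at most $r$.

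Next I would handle the finite-rank contribution. The operator $\mc L_1$ has finite rank, factoring through the finite dimensional space $\mathrm{Im}(\Pi)$, so its Schwartz kernel is smooth and $\mc L_1^n$ is trace class with flat trace equal to its classical trace. Applying the finite-dimensional spectral theorem to $\mc L|_{\mathrm{Im}(\Pi)}$, whose eigenvalues are precisely the Ruelle resonances with $|\lambda|>r$ counted with algebraic multiplicity, yields $\mathrm{Tr}^\flat \mc L_1^n = \mathrm{Tr}(\mc L_1^n) = \sum_{|\lambda|>r}\lambda^n$ (Jordan blocks contribute only through their diagonal entries). By linearity of the flat trace on operators for which it is defined, existence of $\mathrm{Tr}^\flat\mc L_2^n$ follows from the explicit periodic-orbit formula for $\mathrm{Tr}^\flat\mc L^n$ established in the preceding lemma, and the statement of the proposition reduces to the size estimate $|\mathrm{Tr}^\flat \mc L_2^n|\leq Cr^n$.

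The size estimate is the main obstacle, because the flat trace is \emph{not} a continuous functional of the operator in the $H^{-s}\to H^{-s}$ operator norm: the spectral radius bound $\|\mc L_2^n\|^{1/n}\to r^{\ast}\leq r$ does not directly translate into a flat-trace bound of order $r^n$. The strategy, which is the content of Thm 3.5 in \cite{baladi2018dynamical} and Thm 2.4 in \cite{jezequel2017local}, is to reinterpret $\mc L_2$ on a finer, anisotropic Banach (or Hilbert) space $\mc B$ tailored to the dynamics, on which $\mc L_2$ is nuclear of sufficiently high order and whose topology is fine enough that the flat trace computed from the kernel extends continuously. On such a space Lidskii's theorem applies and the flat trace $\mathrm{Tr}^\flat\mc L_2^n$ equals $\sum_j \mu_j^n$ where $\mu_j$ ranges over the eigenvalues of $\mc L_2$ with $|\mu_j|\leq r$; combined with a Weyl-type estimate on $\#\{j:|\mu_j|\geq r'\}$ for $r'<r$, this yields the desired $O(r^n)$ bound. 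Feeding this into the decomposition $\mathrm{Tr}^\flat\mc L^n = \mathrm{Tr}^\flat\mc L_1^n + \mathrm{Tr}^\flat\mc L_2^n$ then gives the proposition. The delicate point, and the reason the proof is outsourced to the literature, is verifying that the anisotropic space $\mc B$ can be constructed for the non-self-adjoint twisted operator $\mc L_{\xi,\tau}$ with $\mc C^k$ (rather than analytic or $\mc C^\infty$) regularity, and that the flat trace defined intrinsically via Dirac approximations actually agrees there with the Lidskii trace computed from $\mc B$-nuclearity.
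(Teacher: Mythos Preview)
The paper does not give its own proof of this proposition; it merely records it as a direct consequence of Thm~3.5 in \cite{baladi2018dynamical} and Thm~2.4 in \cite{jezequel2017local}. Your proposal recognizes this explicitly and supplies a reasonable expository sketch of the argument underlying those references: the Riesz spectral decomposition $\mc L=\mc L_1+\mc L_2$, the straightforward identification $\mathrm{Tr}^\flat\mc L_1^n=\sum_{|\lambda|>r}\lambda^n$ for the finite-rank piece, and the isolation of the genuine difficulty as the remainder bound $|\mathrm{Tr}^\flat\mc L_2^n|\leq Cr^n$. In that sense your proposal and the paper agree: both defer the substance to the cited literature.

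One caveat on your sketch of the hard part. In finite $\mc C^k$ regularity the transfer operator is generally \emph{not} trace class on the relevant anisotropic spaces, so Lidskii's theorem does not apply to yield $\mathrm{Tr}^\flat\mc L_2^n=\sum_j\mu_j^n$ as you write; the phrase ``nuclear of sufficiently high order'' already hints at this, but then invoking Lidskii is inconsistent. The references you (and the paper) cite proceed instead through dynamical determinants, approximation numbers, or regularized determinants adapted to Schatten-type classes, which deliver the $O(r^n)$ estimate without ever identifying the flat trace of the remainder with a convergent spectral sum. Your outline is morally correct and matches the paper's treatment, but the specific mechanism you describe for the remainder is the one suited to the analytic setting rather than to $\mc C^k$.
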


\section{Proof of lemma \ref{indep}}\label{annexe3}
\begin{proof}

Let $X,X',Y,$ and $Y'$ be as in the statement of the lemma real random variables such that $e^{iX}, e^{iX'}$ are uniform on $S^1$ and so that $X$ ad $ X'$ are both independent of all three other random variables.  Let us write $\ma P_Z$ the law of a random variable $Z$.
To show that $e^{i(X+Y)}$ and $e^{i(X'+Y')}$ are independent and uniform on $S^1$, it suffices to show that for any continuous functions $f,g:S^1\longrightarrow \ma R$,
\[\ma E\left[f(e^{i(X+Y)})g(e^{i(X'+Y')})\right]=\int_0^{2\pi}\int_0^{2\pi}f(e^{i\theta})g(e^{i\theta'})\frac{d\theta}{2\pi}\frac{d\theta'}{2\pi}.\]
\[\ma E\left[f(e^{i(X+Y)})g(e^{i(X'+Y')})\right]=\int_{(S^1)^4}f(e^{i(x+y)})g(e^{i(x'+y')})d\ma P_{(X,Y,X',Y')}(x,y,x',y').\]
By hypothesis,
\[d\ma P_{(X,Y,X',Y')}(x,y,x',y')=\frac{dx}{2\pi}\frac{dx'}{2\pi}d\ma P_{(Y,Y')}(y,y').\]
Thus,
\begin{multline*}
\ma E\left[f(e^{i(X+Y)})g(e^{i(X'+Y')})\right]\\= \int_{(S^1)^2}\left(\int_0^{2\pi}\int_0^{2\pi}f(e^{i(x+y)})g(e^{i(x'+y')})\frac{dx}{2\pi}\frac{dx'}{2\pi}\right)d\ma P{(Y,Y')}(y,y')\\
\underset{\theta = x+y,\theta'=x'+y'}{=}\int_{(S^1)^2}\left(\int_0^{2\pi}\int_0^{2\pi}f(e^{i\theta})g(e^{i\theta'})\frac{d\theta}{2\pi}\frac{d\theta'}{2\pi}\right)d\ma P{(Y,Y')}(y,y')\\
=\int_0^{2\pi}\int_0^{2\pi}f(e^{i\theta})g(e^{i\theta'})\frac{d\theta}{2\pi}\frac{d\theta'}{2\pi}.
\end{multline*}
\end{proof}
\section{Topological pressure}\label{annexe}

\subsection{Definition}

\begin{defi}\label{pressure}
Let $\phi: \ma T\longrightarrow \ma R$ be a Hölder-continuous function. 
The limit 
\begin{equation}
\label{def_Pr}
\mathrm{Pr}(\phi):=\lim_{n\to\infty}\frac 1 n\log\left(\sum_{E^n(x)=x}e^{\phi_x^n}\right)
\end{equation}
exists and is called the topological pressure of $\phi$ (see \cite{katok1997introduction} Proposition 20.3.3 p.630). 

\end{defi}
In other words
\begin{equation}\label{pression}
    \sum_{E^n(x)=x}e^{\phi_x^n}=e^{n\mathrm{Pr}(\phi)+o(n)}.
\end{equation}
The particular case $\phi=0$ gives the topological entropy  $\mathrm{Pr}(0)=h_{top}.$
\begin{remark}\label{absorption}
Note that the expression $e^{n\mathrm{Pr}(\phi)+o(n)}$ describes a large class of sequences, since for instance for any $k\in\ma N$,
\[n^k e^{n\mathrm{Pr}(\phi)}=e^{n\mathrm{Pr}(\phi)+o(n)}.\]
\end{remark}
\subsection {Variational principle}
Another definition of the pressure is given by the variational principle.
Let us denote by $h(\mu)$ the entropy of a measure $\mu$ invariant under $E$ (see \cite{katok1997introduction} section 4.3 for a definition of entropy).
For the next theorem, see \cite{katok1997introduction}, sections 20.2 and 20.3.  The last sentence comes from Proposition 20.3.10.
\begin{theorem}[Variational principle]
Let $\phi: \ma T\longrightarrow \ma R$ be a Hölder function. 
\[\mathrm{Pr}(\phi) = \sup_{\mu\ E-\mathrm{invariant}}\left(\int\phi\ d\mu+h(\mu)\right).\]
This supremum, taken over the invariant \textbf{probability} measures, is moreover attained for a unique $E$-invariant measure $\mu$, called equilibrium measure.  In addition, if we note $ J = \log E'$ and $\mu_\beta$ the equilibrium measure of $-\beta J$, $\beta\mapsto\mu_\beta$ is one-to-one.
\end{theorem}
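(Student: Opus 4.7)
This is the classical variational principle for topological pressure, and I would prove it by passing to symbolic dynamics and invoking the Ruelle--Perron--Frobenius theorem for subshifts of finite type. First I would construct a Markov partition $\{I_1,\ldots,I_l\}$ for $E$: such a partition exists because $E$ is topologically conjugate to the multiplication-by-$l$ map on the circle, and one can pull back the standard partition into $l$ equal arcs. This yields a coding map $\pi:\Sigma_l^+\to\ma T$ from the one-sided full $l$-shift that is continuous, surjective, and bijective outside a countable set, and conjugates the shift $\sigma$ with $E$. Uniform expansion of $E$ together with the Hölder property of $\phi$ imply that the pull-back $\tilde\phi := \phi\circ\pi$ is Hölder with respect to the natural metric on $\Sigma_l^+$.

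Next, on the shift side I would apply the Ruelle--Perron--Frobenius theorem to the transfer operator
\[\mathcal{L}_{\tilde\phi}f(x)=\sum_{\sigma y=x}e^{\tilde\phi(y)}f(y)\]
acting on Hölder continuous functions. The theorem gives a simple leading eigenvalue $\lambda=e^{\mathrm{Pr}(\tilde\phi)}$ with a strictly positive continuous eigenfunction $h$, and a dual probability eigenmeasure $\nu$ satisfying $\mathcal{L}_{\tilde\phi}^*\nu=\lambda\nu$. Note that by the definition of $\mathrm{Pr}$ in (\ref{def_Pr}) this identification is consistent once one observes $\sum_{\sigma^n y=y}e^{\tilde\phi^n_y}=\mathcal{L}_{\tilde\phi}^n\mathbf{1}(y)$ up to boundary terms from the coding. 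The normalized product $\mu:=h\,\nu$ is then shift-invariant and has the Gibbs property, which controls $\mu$ on cylinders in terms of the Birkhoff sums of $\tilde\phi$.

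Third, I would establish the two inequalities of the variational principle. For the upper bound, given any invariant probability $\eta$ I would combine Jensen's inequality applied to the concave function $t\mapsto -t\log t$ on the partition of $\Sigma_l^+$ into cylinders of length $n$, with the estimate $\mathcal{L}_{\tilde\phi}^n\mathbf{1}= e^{n\mathrm{Pr}(\tilde\phi)+o(n)}$, and let $n\to\infty$ to get $h(\eta)+\int\tilde\phi\,d\eta\le\mathrm{Pr}(\tilde\phi)$. For the matching equality $h(\mu)+\int\tilde\phi\,d\mu=\mathrm{Pr}(\tilde\phi)$, the Gibbs property of $\mu$ allows a direct computation using Shannon--McMillan--Breiman. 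Uniqueness among shift-invariant measures then follows from the spectral simplicity of $\lambda$: every Gibbs state for $\tilde\phi$ must coincide with $\mu$. Transferring back via the coding $\pi$ (its fibers form a countable set, negligible for any invariant measure that is not purely atomic on periodic orbits) yields the variational principle and uniqueness of the equilibrium measure on $\ma T$.

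Finally, the injectivity of $\beta\mapsto\mu_\beta$ would follow from strict convexity of the pressure $\beta\mapsto\mathrm{Pr}(-\beta J)$. Convexity is immediate from the variational principle as the supremum of affine functions of $\beta$. Strictness reduces to showing that $J=\log E'$ is not cohomologous to a constant: if it were, then $J^n_x=nc$ for every $n$-periodic point $x$, forcing constant expansion rates over all periodic orbits and hence a $\mathcal{C}^1$ conjugacy of $E$ with $x\mapsto lx$, incompatible with a genuinely nonlinear $E$ (i.e. $m<M$). Given strict convexity, Legendre duality $\int J\,d\mu_\beta=-\frac{d}{d\beta}\mathrm{Pr}(-\beta J)$ (obtained by differentiating the variational principle) gives strict monotonicity of $\beta\mapsto\int J\,d\mu_\beta$, whence $\mu_{\beta_1}\neq\mu_{\beta_2}$ whenever $\beta_1\neq\beta_2$. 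The main obstacle in carrying out this plan rigorously is the entropy estimate establishing the upper bound of the variational principle; the remaining ingredients are standard Ruelle--Perron--Frobenius machinery.
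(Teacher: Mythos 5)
The paper does not actually prove this theorem: it is quoted from Katok--Hasselblatt (sections 20.2--20.3, with Proposition 20.3.10 for the last sentence), so your symbolic-dynamics/RPF sketch is not competing with any argument in the text. Your first three paragraphs follow the standard route (Markov coding, Ruelle--Perron--Frobenius, Gibbs property, Shannon--McMillan--Breiman) and are acceptable as a sketch, with the caveats you partly acknowledge: the periodic-orbit quantity $\sum_{\sigma^n y=y}e^{\tilde\phi^n_y}$ is a ``trace'' of $\mc L_{\tilde\phi}^n$, not $\mc L_{\tilde\phi}^n\mathbf 1(y)$, and its comparison with $\lambda^n$ (hence the consistency with the paper's definition of $\mathrm{Pr}$) uses bounded distortion; and the invariant measures charging the countable bad set of the coding are exactly the atomic measures on the orbit of the partition boundary, which must still be checked against the upper bound rather than dismissed.

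The genuine gap is in your last paragraph. Strict convexity of $\beta\mapsto\mathrm{Pr}(-\beta J)$ is equivalent to $J$ not being cohomologous to a constant, and you claim this follows from nonlinearity, i.e. $m<M$. That implication is false: for $E=h\circ(l\,\cdot)\circ h^{-1}$ with $h$ a smooth diffeomorphism of $\ma T$, one has $J=\log l+u\circ E-u$ with $u=\log h'\circ h^{-1}$, so $J$ is cohomologous to the constant $\log l$ even though generically $m<M$; being $\mc C^1$-conjugate to $x\mapsto lx$ is perfectly compatible with nonconstant $E'$. For such maps (and for $E(x)=lx$ itself, which the paper's hypotheses allow), $\mathrm{Pr}(-\beta J)=\log l-\beta\log l$ is affine, every $\mu_\beta$ equals the measure of maximal entropy, and $\beta\mapsto\mu_\beta$ is constant --- so the injectivity you are trying to prove actually fails without an additional hypothesis. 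The correct content of KH Proposition 20.3.10 is that $\mu_\beta=\mu_{\beta'}$ if and only if $(\beta-\beta')J$ is cohomologous to a constant; injectivity therefore requires assuming $J$ is not cohomologous to a constant, which neither your argument nor the paper's standing assumptions guarantee. (The downstream Corollary that $F(\beta)=\frac1\beta\mathrm{Pr}(-\beta J)$ is strictly decreasing survives even in the cohomologically trivial case, where $F(\beta)=\log l/\beta-\log l$, but that requires a separate argument from the one via uniqueness of equilibrium states.)
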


\begin{corollary}
\label{F}

The function
\begin{equation}
\label{def_F}
\fonction{F}{\ma R_+^*}{\ma R}{\beta}{\frac 1\beta\mathrm{Pr}(-\beta J )}
\end{equation}
is strictly decreasing.
\end{corollary}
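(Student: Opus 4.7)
The plan is to use the variational principle together with the stated injectivity of $\beta \mapsto \mu_\beta$. Given $0 < \alpha < \beta$, showing $F(\alpha) > F(\beta)$ amounts to showing $\beta\, \mathrm{Pr}(-\alpha J) > \alpha\, \mathrm{Pr}(-\beta J)$, so I will rearrange $F(\alpha) - F(\beta) = \frac{\beta\, \mathrm{Pr}(-\alpha J) - \alpha\, \mathrm{Pr}(-\beta J)}{\alpha\beta}$ and work on the numerator.

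First, by definition of the equilibrium measure $\mu_\beta$ as the maximizer in the variational principle,
\[\mathrm{Pr}(-\beta J) = h(\mu_\beta) - \beta \int J\, d\mu_\beta.\]
Next, since $\mu_\alpha$ is the \emph{unique} maximizer for $-\alpha J$, any other invariant probability measure gives a strictly smaller value. The key input is the injectivity of $\beta \mapsto \mu_\beta$ from the theorem: it ensures $\mu_\beta \ne \mu_\alpha$, so testing the variational principle at the suboptimal measure $\mu_\beta$ yields the strict inequality
\[\mathrm{Pr}(-\alpha J) > h(\mu_\beta) - \alpha \int J\, d\mu_\beta.\]

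Multiplying this strict inequality by $\beta > 0$ and subtracting $\alpha$ times the equality above,
\[\beta\, \mathrm{Pr}(-\alpha J) - \alpha\, \mathrm{Pr}(-\beta J) > (\beta - \alpha)\, h(\mu_\beta) \geq 0,\]
since entropy is nonnegative. Dividing by $\alpha\beta > 0$ gives $F(\alpha) > F(\beta)$, which is the desired strict monotonicity. The only nontrivial ingredient is the injectivity $\beta \mapsto \mu_\beta$, which is already stated in the variational principle theorem, so no further work on the dynamics is needed; the nonnegativity $h(\mu_\beta) \geq 0$ is automatic and does not require the stronger statement $h(\mu_\beta) > 0$ (which would arise if one tried the alternative approach via the formal derivative $F'(\beta) = -h(\mu_\beta)/\beta^2$ and would require additional justification through non-atomicity of $\mu_\beta$). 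Thus the anticipated obstacle — having to argue positivity of entropy — is circumvented by combining two variational estimates rather than differentiating.
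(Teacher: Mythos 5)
Your proof is correct and is essentially the paper's own argument: both test the variational principle for the smaller exponent at the equilibrium measure $\mu_\beta$ of the larger one, use uniqueness/injectivity to get the strict inequality, and then conclude from $h(\mu_\beta)\geq 0$; your "multiply by $\beta$ and subtract $\alpha$ times the equality" is just an algebraic rearrangement of the paper's chain $F(\beta)>\int -J\,d\mu_{\beta'}+h(\mu_{\beta'})/\beta\geq F(\beta')$. No gaps.
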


\begin{proof}
Let $\beta'>\beta>0$.
By the previous theorem, with the same notations,
\[\int-\beta J\ d\mu_\beta+h(\mu_\beta)>\int-\beta J\ d\mu_{\beta'}+h(\mu_{\beta'})\]
and thus
\[F(\beta) =\int-J\ d\mu_\beta+\frac{h(\mu_\beta)}\beta> \int-J\ d\mu_{\beta'}+\frac{h(\mu_{\beta'})}\beta\geq\int-J\ d\mu_{\beta'}+\frac{h(\mu_{\beta'})} {\beta'}=F(\beta').\]
\end{proof}
 
\subsection{Proof of Lemma \ref{ref}}\label{label}
Let $\phi:\ma T\rightarrow\ma R$ be a $\mc C^1$ function.  Let as before $\phi^n_x$ be the Birkhoff sum (\ref{birkhoff}).  By subadditivity of the sequence $\left(\inf_{x\in\ma T}\phi^n_x\right)_n$ and Fekete's Lemma we can define the following quantity:
\begin{defi}
Let us define  \begin{equation}\label{phimin}
    \phi_{\text{min}}:=\lim_{n\to\infty}\inf_{x\in\ma T}\frac 1n\phi^n_x.
\end{equation}

\end{defi}
\begin{lemma}
The infimum in (\ref{phimin}) can be taken over periodic points:
\begin{equation}
    \phi_{\text{min}}=\lim_{n\to\infty}\inf_{x,E^n(x)=x}\frac 1n\phi^n_x.
\end{equation}
\end{lemma}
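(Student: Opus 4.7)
The plan is to prove the two inequalities separately. The inequality
\[\liminf_{n\to\infty}\inf_{E^n(x)=x}\frac 1n\phi^n_x\geq \phi_{\text{min}}\]
is immediate, because the infimum over periodic points is larger than the infimum over all of $\ma T$, and the latter converges to $\phi_{\text{min}}$ by definition. So the whole content is to produce, for each $n$, a periodic point whose Birkhoff average is essentially the same as the minimum of $\frac 1n\phi^n_x$ over all $x$.

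To achieve this I would use a shadowing-by-periodic-orbits argument tailored to expanding maps. Fix $n$ and a point $x^*\in\ma T$ almost minimizing $\frac 1n\phi^n_{x^*}$. Let $\psi_n$ be the unique inverse branch of $E^n$ satisfying $\psi_n(E^n(x^*))=x^*$; it is defined on all of $\ma T$ and contracts by the factor $m^{-n}$ since $\psi_n'=1/(E^n)'\circ\psi_n\leq m^{-n}$. By the Banach fixed point theorem, $\psi_n$ has a unique fixed point $y$ in the cylinder of order $n$ containing $x^*$, and this $y$ is a periodic point with $E^n(y)=y$. Moreover, since $x^*$ and $y$ lie in the same cylinder of order $n$, the images $E^k(x^*)$ and $E^k(y)$ lie in a common cylinder of order $n-k$, whose length is at most $m^{-(n-k)}$ by the mean value theorem applied to $E^{n-k}$.

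Since $\phi\in\mc C^1(\ma T)$ is Lipschitz with some constant $L$, I can then compare the two Birkhoff sums:
\[\left|\phi^n_y-\phi^n_{x^*}\right|\leq L\sum_{k=0}^{n-1}\left|E^k(y)-E^k(x^*)\right|\leq L\sum_{k=0}^{n-1}m^{-(n-k)}\leq \frac{L}{m-1}.\]
This bound is \emph{uniform in $n$}, which is the crucial point. Dividing by $n$ yields
\[\inf_{E^n(y)=y}\frac 1n\phi^n_y\leq\frac 1n\phi^n_y\leq\frac 1n\phi^n_{x^*}+\frac{L}{n(m-1)}.\]
Taking the infimum over $x^*$ on the right and passing to the limit $n\to\infty$ gives
\[\limsup_{n\to\infty}\inf_{E^n(y)=y}\frac 1n\phi^n_y\leq \phi_{\text{min}},\]
which, combined with the trivial lower bound above, proves that the limit exists and equals $\phi_{\text{min}}$.

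The main technical step is the existence of the shadowing periodic point $y$ together with the uniform estimate on $|E^k(y)-E^k(x^*)|$. Once the expanding inverse branch $\psi_n$ is set up, everything else is a routine application of the contraction mapping theorem and geometric summation; the difficulty is conceptual rather than computational, and the estimate bounding the length of a cylinder by $m^{-(n-k)}$ is the place where the expansivity of $E$ enters in an essential way.
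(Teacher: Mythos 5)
Your proof is correct and follows essentially the same route as the paper: both arguments locate, in the order-$n$ cylinder containing an (almost) minimizing point, a period-$n$ point whose orbit shadows that of the minimizer with error $O(m^{-(n-k)})$ at step $k$, and then use the Lipschitz bound on $\phi$ to get a Birkhoff-sum discrepancy bounded uniformly in $n$. The only cosmetic difference is that you produce the periodic point via the contraction mapping theorem for the inverse branch, whereas the paper exhibits it directly from the lift equation $E^n(y)=y+k$ on the interval between consecutive preimages of a fixed point.
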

\begin{proof}
By lifting the expanding map to $\ma R$, we easily see that $E$ has at least a fixed point $x_0$.  This point has $l^n$ preimages by $E^n$, defining $l^n-1$ intervals $I_k^n$ such that for all $1\leq k\leq l^n-1$
\[E^n:I^n_k\rightarrow\ma T\backslash\{x_0\}\]
is a diffeomorphism.  Thus, there exists $C>0$ such that for all $k$, if $x,y\in \overline {I^n_k}$,
\[\forall0\leq j\leq n,d(E^j(x),E^j(y))\leq \frac C{m^{n-j}},\]
with $m=\inf\abs{E'}>1$.  Each $\overline {I^n_k}$ contains moreover a periodic point $y_{k,n}$ of period $n$ given by $E^n(y_{k,n})=y_{k,n}+k$.
Hence let $n\in\ma N$, let $x_n\in\ma T$ be such that
\[\phi^n_{x_n}= \inf_{x\in\ma T}\phi_x^n,\]
and suppose that $x_n \in \overline{I_k^n}$.  We have
\[\begin{split}
    \left|\phi^n_{y_{k,n}}-\phi^n_{x_n}\right|&=\left|\sum_{j=0}^{n-1}\phi(E^j(x_n))-\phi(E^j(y_{k,n}))\right|\\
    &\leq C\max\abs{\phi'}\sum_{k=0}^\infty \frac{1}{m^k}
\end{split}\]
is bounded independently of $n$.  Consequently
\[\lim_{n\to\infty}\inf_{x,E^n(x)=x}\frac 1n\phi^n_x=\phi_{\text{min}}.\]
\end{proof}
\begin{lemma}
\[F(\beta)\underset{\beta\to+\infty}{\longrightarrow}-\phi_{\text{min}}.\]
\end{lemma}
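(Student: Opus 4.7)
The plan is to obtain matching upper and lower bounds on $F(\beta)$ directly from the periodic-point definition (\ref{def_Pr}) of the pressure applied to $\phi = -\beta J$. The statement $F(\beta)\to -\phi_{\min}$ is to be read with $\phi := J$, so that $\phi_{\min}=\lim_n \inf_{x\in\ma T}\frac{1}{n}J^n_x$, which by the preceding lemma equals $\lim_n \inf_{E^n(x)=x}\frac{1}{n}J^n_x$; this latter expression is precisely what appears in (\ref{def_Pr}).

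For the \textbf{upper bound}, Lemma \ref{periodic} gives $\#\{x : E^n(x)=x\}=l^n-1$, and estimating every term of the defining sum by the maximum exponential yields
\[\sum_{E^n(x)=x}e^{-\beta J^n_x}\;\le\;(l^n-1)\exp\!\left(-\beta \inf_{E^n(x)=x}J^n_x\right).\]
Taking $\frac{1}{n}\log$ and passing to the limit $n\to\infty$, the factor $\frac{1}{n}\log(l^n-1)$ tends to $\log l$ and $\frac{1}{n}\inf_{E^n(x)=x}J^n_x$ tends to $\phi_{\min}$ by the previous lemma, giving $\mathrm{Pr}(-\beta J)\le \log l-\beta\phi_{\min}$. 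Dividing by $\beta$ yields $F(\beta)\le \frac{\log l}{\beta}-\phi_{\min}$, whence $\limsup_{\beta\to\infty}F(\beta)\le -\phi_{\min}$.

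For the \textbf{lower bound}, keeping only the largest term (namely the one minimizing $J^n_x$) in the sum gives
\[\sum_{E^n(x)=x}e^{-\beta J^n_x}\;\ge\;\exp\!\left(-\beta \inf_{E^n(x)=x}J^n_x\right),\]
and the same limiting procedure yields $\mathrm{Pr}(-\beta J)\ge -\beta\phi_{\min}$, hence $F(\beta)\ge -\phi_{\min}$ for every $\beta>0$. Combining both bounds gives the desired convergence $F(\beta)\to -\phi_{\min}$.

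There is no serious obstacle here: the proof is essentially a squeeze between two elementary estimates on a finite sum of positive exponentials. The only point requiring care is the justification that the limit $\lim_n \frac{1}{n}\inf_{E^n(x)=x}J^n_x$ exists and equals $\phi_{\min}$, which is precisely the content of the previous lemma (and ultimately of Fekete's lemma applied to the subadditive sequence $\inf_{x\in\ma T}J^n_x$). One could alternatively argue via the variational principle by writing $F(\beta)=\frac{h(\mu_\beta)}{\beta}-\int J\,d\mu_\beta$ and using the uniform bound $h(\mu_\beta)\le h_{\text{top}}\le \log l$ together with the fact that $\int J\,d\mu_\beta\to \inf_\mu \int J\,d\mu=\phi_{\min}$, but this demands extra work to identify the infimum of integrals with $\phi_{\min}$, so the direct periodic-point approach above is preferable.
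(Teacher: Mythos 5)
Your proof is correct and follows essentially the same route as the paper: bound the sum $\sum_{E^n(x)=x}e^{-\beta\phi^n_x}$ below by its largest term and above by $l^n$ times that term, then use the previous lemma to identify $\lim_n\frac1n\inf_{E^n(x)=x}\phi^n_x$ with $\phi_{\min}$ and squeeze $-\phi_{\min}\le F(\beta)\le\frac{\log l}{\beta}-\phi_{\min}$. The only cosmetic difference is that the paper runs the limit through an explicit $\epsilon$-argument rather than invoking the limit of the infima directly.
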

\begin{proof}
Let $\beta>0$.  Let us write
\[F_n(\beta) = \frac {1} {n \beta}\log\left(\sum_{E^n(x)=x}e^{-\beta\phi_x^n}\right),\]
so that 
\[F(\beta) \underset{(\ref{def_Pr},\ref{def_F})}{=} \lim_{n\to\infty}F_n(\beta).\]
Let  $\epsilon>0$.
By definition of $\phi_{\min}$, for $n$ large enough, 
\[\forall x\in\mathrm{Per}(n), \phi^n_x\geq n(\phi_{\min}-\epsilon)\]
and
\[\exists x\in\mathrm{Per}(n), \phi^n_x\leq n(\phi_{\min}+\epsilon).\]
Thus,
\[e^{-\beta n(\phi_{\min}+\epsilon)}\leq\sum_{E^n(x)=x}e^{-\beta\phi_x^n}\leq l^n e^{-\beta n(\phi_{\min}-\epsilon)}\]
and consequently
\[-\phi_{\min}-\epsilon\leq F_n(\beta)\leq \frac{\log l}{\beta}-\phi_{\min}+\epsilon.\]
Hence, letting $\epsilon\to 0$, we get
\[-\phi_{\min} \leq F(\beta)\leq \frac{\log l}{\beta}-\phi_{\min}.\]
When $\beta$ goes to infinity, the result follows.
\end{proof}

\begin{proof}[Proof of Lemma \ref{ref}]
Now we take $\phi = J = \log( E')$.  By the definition of $J_{\min}$
\[\inf_{O\in\mathrm{Per}(n)}{J^n_O}=nJ_{\min}+o(n),\]
thus
\begin{equation}
    \label{sE1}
\sup_{O\in\mathrm{Per}(n)}\frac{1}{e^{J_O}-1}=e^{-nJ_{\min}+o(n)}=e^{n\lim_{\beta\to\infty} F(\beta)+o(n)}.
\end{equation}
We have
\begin{multline}
\label{BG4}\frac 1{\sqrt n}\left(\sum_{m|n}m\sum_{O\in\mc P_m}\frac 1{(e^{J_O^n}-1)^2}\right)^{-\frac 12}\leq A_n \ \ \ \underset{(\ref{def_An})}{=} \left(\sum_{m|n}m^2\sum_{O\in\mc P_m}\frac 1{(e^{J_O^n}-1)^2}\right)^{-\frac 12}\\\leq\left(\sum_{m|n}m\sum_{O\in\mc P_m}\frac 1{(e^{J_O^n}-1)^2}\right)^{-\frac 12}.\end{multline}
Since 
\[\begin{split}
    \sum_{m|n}m\sum_{O\in\mc P_m}\frac 1{(e^{J_O^n}-1)^2}&= \sum_{E^n(x)=x}\frac{1}{(e^{J_x^n}-1)^2}\\
    &=\sum_{E^n(x)=x}e^{-2J_x^n}\left(1+O\left(e^{-J^n_x}\right)\right)\\
    &=\left(\sum_{E^n(x)=x}e^{-2J_x^n}\right)(1+o(1))\\
    &=e^{n\mathrm{Pr}(-2J)+o(n)},
\end{split}\]
Eq.(\ref{BG4}) gives
\begin{equation*}
   \frac 1{\sqrt n}  e^{-\frac{n}{2} \mathrm{Pr}(-2J)+o(n)} \leq A_n \leq  e^{-\frac{n}{2} \mathrm{Pr}(-2J)+o(n)}
\end{equation*}
hence from Remark \ref{absorption}
\[nA_n= e^{-\frac n2\mathrm{Pr}(-2J)+o(n)}=e^{-nF(2)+o(n)}.\]

Finally, 
\[nA_n\sup_{O\in\mathrm{Per}(n)}\frac{1}{e^{J_O}-1} \underset{(\ref{sE1})}{=} e^{n(\lim\limits_\infty F - F(2)+o(n))}\to 0\]
from Corollary \ref F.
\end{proof}
\bibliographystyle{amsalpha}

\newpage
\bibliography{bib}

\end{document}